 \newtheorem{thm}{Theorem}[section]
 \newtheorem{cor}[thm]{Corollary}
 \newtheorem{lem}[thm]{Lemma}
\title{Essential norm estimates for weighted composition operator on the logarithmic Bloch space}
\author{Mar{\'i}a T. MALAV\'E-RAM{\'I}REZ$^1$ and
Julio C. RAMOS-FERN\'ANDEZ$^2$\\
\small Departamento de Matem\'aticas\\
\small Universidad de Oriente\\
\small 6101 Cuman\'a, Estado Sucre\\
\small Rep\'ublica Bolivariana de Venezuela\\
\small $^1$mtmalave@udo.edu.ve and $^2$jcramos@udo.edu.ve}
\date{}
\begin{document}
\maketitle
%----------classification, keywords, date
%\subjclass{Primary 46B33; Secondary 47B38, 30H30.}

%\keywords{Essential norm, weighted composition operator,  Bloch-type space, Zygmund-type space.}

%\date{July 2, 2013}
%----------additions
%%% ----------------------------------------------------------------------
\begin{abstract}
In this article, we estimate the essential norm of weighted composition operator  $W_{u, \varphi}$, acting on the logarithmic Bloch space  $\mathcal{B}^{v_{\log}}$, in terms of the  $n$-power of the analytic function  $\varphi$ and the norm of the  $n$-power of the identity function. Also, we estimate the essential norm of the weighted composition operator from $\mathcal{B}^{v_{\log}}$ into the growth space  $H_{v_{\log}}^{\infty}$.  As a consequence of our result, we estimate the essential norm of the composition operator $C_\varphi$ acting on the Logarithmic-Zygmund space.

\vspace{0.2cm}
\noindent {\it MSC 2010:} Primary 46B33; Secondary 47B38, 30H30.

\noindent {\it Keywords:}  Essential norm, weighted composition operator,  Bloch-type space, Zygmund-type space.
\end{abstract}
% -----------------------------------------------------------

% -----------------------------------------------------------

% -----------------------------------------------------------

\section{Introduction}
Let $\mathbb{D}$ be the unit disk of the complex plane  $\mathbb{C}$ and let  $H(\mathbb{D})$ be the space of all holomorphic functions on  $\mathbb{D}$ endowed with the topology of the uniform convergence on compact subsets of  $\Bbb D$. For fixed holomorphic functions $u:\Bbb D\to \Bbb C$ and  $\varphi:\Bbb D\to\Bbb D$, we can define the linear operator $W_{u,\varphi}:H(\Bbb D)\to H(\Bbb D)$ by
$$
 W_{u,\varphi}(f):= u\cdot( f\circ \varphi).
$$
Which is known as the {\it weighted composition} with symbols $u$ and $\varphi$. Clearly, if $u\equiv 1$ we have $W_{1,\varphi}(f)=  f\circ \varphi =C_\varphi(f)$, the composition operator $C_\varphi$, and if $\varphi(z)=id(z)=z$ for all $z\in\Bbb D$, we have obtain $W_{u,id}(f)= u\cdot f=M_u(f)$,  the multiplication operator $M_u$. Furthermore, we can see that
 $W_{u,\varphi}$ is 1-1 on $H(\Bbb D)$ unless that $u\equiv 0$ or $\varphi$ is a constant function. However, if we wish to study properties like as continuity, compactness, essential norm, etc. of this operator, we need restrict the domain and target space $H(\Bbb D)$ to a normed and complete subspace of $H(\Bbb D)$. In this article we consider the restriction to the growth space of all analytic functions $f$ on $\Bbb D$ such that
\begin{equation}\label{def-norma-Hv}
\|f \|_{H_v^\infty}= \sup_{z \in \mathbb{D}}v(z)|f(z)| <\infty,
\end{equation}
where $v:\Bbb D\to \Bbb R^+$ is a {\it weight function}, that is, a bounded, continuous and positive function defined on $\Bbb D$. Also, we consider the Bloch-type space $\mathcal{B}^v$ of all analytic function $f$ on $\Bbb D$ such that $f'\in H_v^\infty$. It is known that $H_v^\infty$ is a Banach space with the norm defined in (\ref{def-norma-Hv}) and  $\mathcal{B}^v$  is a Banach space with the norm
$$
\|f\|_{\mathcal{B}^v} =\left|f(0)\right| + \|f '\|_{H_v^\infty} = \left|f(0)\right| +\|f \|_{\widetilde{\mathcal{B}}^{v}},
$$
where,
$$
\|f \|_{\widetilde{\mathcal{B}}^{v}}= \sup_{z \in \mathbb{D}}v(z)|f'(z)|.
$$
The properties of $W_{u,\varphi}$ acting between growth-type spaces were studied by Hyv\"arinen et al. \cite{HKLRS12} and by Malav\'e-Ram{\'i}rez and Ramos-Fern\'andez \cite{malave-ramos}, for very general weights $v$; however, properties of $W_{u,\varphi}$ acting on Bloch-type spaces are  still in develops. About this last, we can mention the works of Hyv\"{a}rinen and Lindstr\"{o}m in \cite{hyvarinen-lindstrom}.  Also,  there are  no much works about the properties of $W_{u,\varphi}$ between $H_v^\infty$ and $\mathcal{B}^v$, we can mention the work of Stevi\'c in \cite{stevic}.

\vspace{0.2cm}
In this note, we estimate the essential norm of $W_{u,\varphi}$ acting on the logarithmic Bloch space $\mathcal{B}^{v_{\log}}$ (also known as the weighted Bloch space), where the weight consider here is defined by
$$
  v_{\log}(z)=(1-|z|)\log \left(\frac{2}{1-|z|} \right)
$$
with $z\in\Bbb D$ which clearly is radial and typical ($\lim_{|z|\to 1^-}v_{\log}(z)=0$). This  space appears in the literature when we study properties of certain operators acting on certain spaces of analytic functions on the unit disk $\Bbb D$ of the complex plane $\Bbb C$. For instance, in 1991,  Brown and Shields \cite{BS91} showed that an analytic function $u$ is a multiplier on the Bloch space $\mathcal{B}$ if and only if $u\in \mathcal{B}^{v_{\log}}$.
Also,  in 1992,    K. Attele \cite{At92} showed  that the Hankel operator induced by a function $f\in H(\Bbb D)$ in the Bergman space $L_a^1$ (for the definition of Bergman space and the Hankel operator see  \cite{Zh90}) is bounded if and only if $\left\|f\right\|_{\mathcal{B}^{v_{\log}}}<\infty$. The study of composition operators acting on the weighted Bloch space began with the work of Yoneda \cite{yoneda}, where he characterized the continuity
and compactness of composition operators acting on the weighted Bloch space $\mathcal{B}^{v_{\log}}$. These last results  were extended by  Galanoupulus  \cite{galanopoulus} and Ye \cite{ye} for weighted composition operators acting on $\mathcal{B}^{v_{\log}}$. More recently,
Malav\'e-Ram{\'i}rez and Ramos-Fern\'andez  \cite{malave-ramos}, following similar ideas used by  Hyv\"{a}rinen and  Lindstr\"{o}m in  \cite{hyvarinen-lindstrom}, characterized the continuity and compactness of  $W_{u, \varphi}$ acting on $\mathcal{B}^{v_{\log}}$ in terms of certain expression involving the  $n$-th power of $\varphi$ and
the log-Bloch norm of the $n$-th power of the identity function on $\Bbb D$. The results obtained by Malav\'e-Ram{\'i}rez and Ramos-Fern\'andez can be enunciated as follows:

\begin{thm}[\cite{malave-ramos}]\label{cor2}
Suppose that  $u:\Bbb D\to\Bbb C$ and  $\varphi:\Bbb D\to\Bbb D$ are holomorphic functions.
  \begin{enumerate}
   \item The operator  $W_{u, \varphi}$  is continuous on  $\mathcal{B}^{v_{\log}}$  if and only if
    $$
     \max\left\{\sup_{n\in\Bbb W}\frac{(n+1)\left\|J_u\left(\varphi^n\right)\right\|_{\mathcal{B}^{v_{\log}}}}
  {\left\|g_{n+1}\right\|_{\mathcal{B}^{w_{\log}}}},\sup_{n\in\Bbb W}\frac{\left\|I_u\left(\varphi^n\right)\right\|_{\mathcal{B}^{v_{\log}}}}
  {\left\|g_n\right\|_{\mathcal{B}^{v_{\log}}}}\right\}<\infty.
    $$
  \item The operator  $W_{u, \varphi}$  is compact on $\mathcal{B}^{v_{\log}}$ if and only if
  $$
  \max \left\{\lim_{n \to \infty}\dfrac{(n+1)\| J_{u}(\varphi^n)\|_{\mathcal{B}^{v_{\log}}}}{\|g_{n+1}\|_{\mathcal{B}^{w_{\log}}}}, \lim_{n \to \infty} \dfrac{\|I_{u}(\varphi^n)\|_{\mathcal{B}^{v_{\log}}}}{\|g_n\|_{\mathcal{B}^{v_{\log}}}} \right\}=0,
  $$
  \end{enumerate}
where $\Bbb W=\left\{0,1,2,\cdots\right\}$, $g_0\equiv 1$, $g_n(z)=z^n$ for $n\in\Bbb N=\left\{1,2,3,\cdots\right\}$ and $z\in\Bbb D$,
\begin{equation}\label{def-peso-w}
w_{\log}(z)=\left[\log \log \left(\dfrac{4}{1-|z|^2} \right)\right]^{-1},
\end{equation}
and the functionals  $I_u,J_u:H(\Bbb D)\to \Bbb C$ are defined by
 $$
  I_u\left(f(z)\right)= \int_0^z f'(s) u(s) ds,\hspace{0.2cm}\text{and}\hspace{0.2cm} J_u\left(f(z)\right)= \int_0^z f(s) u'(s) ds.
 $$
\end{thm}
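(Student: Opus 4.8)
The plan is to reduce the two-term structure of $W_{u,\varphi}$ to two scalar quantities that are, up to constants, the two suprema appearing in the statement. The starting point is the elementary identity obtained by differentiating $W_{u,\varphi}f=u\cdot(f\circ\varphi)$:
\[
(W_{u,\varphi}f)'(z) = u'(z)\,f(\varphi(z)) + u(z)\,f'(\varphi(z))\,\varphi'(z) = \bigl(J_u(f\circ\varphi)\bigr)'(z) + \bigl(I_u(f\circ\varphi)\bigr)'(z),
\]
so that $W_{u,\varphi}f = u(0)f(\varphi(0)) + J_u(f\circ\varphi) + I_u(f\circ\varphi)$ and the seminorm $\|W_{u,\varphi}f\|_{\widetilde{\mathcal{B}}^{v_{\log}}}$ splits into a part governed by $u'\,(f\circ\varphi)$ and a part governed by $u\,(f'\circ\varphi)\,\varphi'$. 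I would first show that boundedness (resp. compactness) of $W_{u,\varphi}$ on $\mathcal{B}^{v_{\log}}$ is equivalent to boundedness (resp. compactness) of both pieces; the point evaluation $f\mapsto u(0)f(\varphi(0))$ is harmless since it is a rank-one, hence bounded and compact, operator.

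The heart of the argument is a weight-reconstruction lemma from the theory of weighted spaces: for a nice radial weight $\nu$ one has $\tilde\nu(\zeta)^{-1}\asymp \sup_{n}|\zeta|^{n}/\|g_n\|_{H^\infty_\nu}$, where $\tilde\nu$ is the associated weight and $\|g_n\|_{H^\infty_\nu}=\sup_{r}\nu(r)r^n$; I would also record that $v_{\log}$ and $w_{\log}$ are essentially self-associated, i.e. $\tilde v_{\log}\asymp v_{\log}$ and $\tilde w_{\log}\asymp w_{\log}$. For the $I_u$-part, plugging $f=g_n$ gives $(I_u(\varphi^n))'=n\,u\,\varphi^{n-1}\varphi'$, and since $\|g_n\|_{\mathcal{B}^{v_{\log}}}=n\sup_r v_{\log}(r)r^{n-1}$ the factor $n$ cancels, leaving
\[
\frac{\|I_u(\varphi^n)\|_{\mathcal{B}^{v_{\log}}}}{\|g_n\|_{\mathcal{B}^{v_{\log}}}}
=\frac{\sup_z v_{\log}(z)|u(z)||\varphi'(z)||\varphi(z)|^{n-1}}{\sup_r v_{\log}(r)r^{n-1}};
\]
taking the supremum over $n$ and interchanging the two suprema, the reconstruction lemma turns this into $\sup_z v_{\log}(z)|u(z)||\varphi'(z)|/\tilde v_{\log}(\varphi(z))$, the familiar Hyv\"arinen--Lindstr\"om quantity. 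The weight $w_{\log}$ enters precisely through the $J_u$-part: since $(J_u(\varphi^n))'=u'\varphi^n$ and $\|g_{n+1}\|_{\mathcal{B}^{w_{\log}}}=(n+1)\sup_r w_{\log}(r)r^n$, the factor $(n+1)$ cancels and
\[
\sup_n\frac{(n+1)\|J_u(\varphi^n)\|_{\mathcal{B}^{v_{\log}}}}{\|g_{n+1}\|_{\mathcal{B}^{w_{\log}}}}
=\sup_z v_{\log}(z)|u'(z)|\,\sup_n\frac{|\varphi(z)|^n}{\sup_r w_{\log}(r)r^n}
\asymp \sup_z\frac{v_{\log}(z)|u'(z)|}{\tilde w_{\log}(\varphi(z))}.
\]
The appearance of $w_{\log}$ is explained by the embedding $\mathcal{B}^{v_{\log}}\hookrightarrow H^\infty_{w_{\log}}$, since $\int_0^{|z|} dt/v_{\log}(t)\asymp \log\log(2/(1-|z|))\asymp 1/w_{\log}(z)$ controls the growth of $f$ itself, which is exactly what multiplies $u'$.

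With these two identifications in hand, boundedness follows by bounding each part of $\|W_{u,\varphi}f\|_{\widetilde{\mathcal{B}}^{v_{\log}}}$ against the corresponding supremum on the unit ball of $\mathcal{B}^{v_{\log}}$ (using $|f'(\varphi(z))|\le\|f\|/\tilde v_{\log}(\varphi(z))$ and $|f(\varphi(z))|\lesssim\|f\|/\tilde w_{\log}(\varphi(z))$), while necessity comes from testing on the normalized monomials $g_n$. For compactness one replaces suprema by limits as $n\to\infty$ and uses the standard criterion that $W_{u,\varphi}$ is compact on $\mathcal{B}^{v_{\log}}$ iff $\|W_{u,\varphi}f_k\|_{\mathcal{B}^{v_{\log}}}\to 0$ for every norm-bounded sequence $(f_k)$ converging to $0$ uniformly on compact subsets of $\mathbb{D}$. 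I expect the main obstacle to be the separation of the two pieces in the necessity direction: since testing with a single $g_n$ only controls the sum $u'\varphi^n + n\,u\,\varphi^{n-1}\varphi'$, one must rule out cancellation between the $I_u$- and $J_u$-contributions, which requires constructing, for each target point $a=\varphi(z)$ near the boundary, test functions that isolate the value $f(a)$ from the derivative $f'(a)$; verifying the self-association $\tilde v_{\log}\asymp v_{\log}$, $\tilde w_{\log}\asymp w_{\log}$ and the legitimacy of interchanging the suprema are the remaining technical points.
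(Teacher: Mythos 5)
Your outline is correct and follows essentially the same route as the source of this statement: the paper does not prove Theorem \ref{cor2} but imports it from \cite{malave-ramos}, whose argument (after Hyv\"arinen--Lindstr\"om \cite{hyvarinen-lindstrom}) is precisely your decomposition $(W_{u,\varphi}f)'=u'\,(f\circ\varphi)+u\,(f'\circ\varphi)\,\varphi'$ --- the identity behind the paper's Lemma \ref{lem2}, $D_{\mu_2}W_{u,\varphi}D_{\mu_1}^{-1}=W_{u',\varphi}D_{\mu_1}^{-1}+W_{u\varphi',\varphi}$ --- combined with the associated-weight/monomial comparison $\sup_n |\zeta|^n/\|g_n\|_{H_\nu^\infty}\simeq 1/\widetilde{\nu}(\zeta)$ for the essential weights $v_{\log}$ and $w_{\log}$, which is exactly the machinery from \cite{montes} and \cite{HKLRS12} invoked in Corollary \ref{cor-3-2}. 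The one step you flag but do not carry out --- test functions separating the value $f(\varphi(z))$ from the derivative $f'(\varphi(z))$ in the necessity direction --- is supplied by Ye's $\log\log$-type functions \cite{ye}, written out in Section 4 of this paper as $f_n$ (with $f_n'(\varphi(z_n))=0$ and $f_n(\varphi(z_n))=a_n$) and $h_n$ (with $h_n(\varphi(z_n))=0$ and $h_n'(\varphi(z_n))$ comparable to $1/\widetilde{v}_{\log}(\varphi(z_n))$), so your plan is complete modulo that standard construction.
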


 The main goal of the present article is to find an estimation of the essential norm of the operator  $W_{u, \varphi}:\mathcal{B}^{v_{\log}}\to \mathcal{B}^{v_{\log}}$  which implies the result, about compactness, mentioned in the item (2) of Theorem \ref{cor2} above. Allow us recall that the {\it essential norm} of a continuous  operator $T:X\to Y$, between Banach spaces $X$ and $Y$, denoted by $\| T\|_{e}^{X\to Y}$, is its distance to the class of the compact operators, that is, $\| T\|_{e}^{X\to Y}=\inf \{\|T-K \|^{X\to Y}~:~ K:X\to Y~ \textrm{is ~ compact} \}$, where $\|T\|^{X\to Y}$ denotes the norm of the operator $T:X\to Y$. Notice that $T:X\to Y$ is compact if and only if $\| T\|_{e}^{X\to Y}=0$.

\vspace{0.2cm}
 Recent results about essential norm estimates on log-Bloch spaces can be found in \cite{CCFR13} for the composition operator $C_{\varphi}: \mathcal{B}^{v_{3}}\to \mathcal{B}^{v}$, with $v_{3}(z)=(1-|z|)\log\left(\frac{3}{1-|z|} \right)$ and in \cite{ye2}, where Ye estimated the essential norm of the operator $DC_{\varphi}: \mathcal{B}^{v_{e}}\to H_{v}^{\infty}$ defined by $DC_{\varphi}(f):=W_{\varphi',\varphi}(f')$ with
$v_{e}(z)=(1-|z|)\log \left(\frac{2e}{1-|z|} \right)$.

\vspace{0.2cm}
 In this article we are going to show the following result:

\begin{thm}\label{teo1}
Suppose that $u:\Bbb D\to \Bbb C$ and  $\varphi:\Bbb D\to \Bbb D$ are holomorphic functions and that  $ W_{u,\varphi}:\mathcal{B}^{v_{\log}}\to \mathcal{B}^{v_{\log}}$ is continuos. Then
$$
\|W_{u, \varphi} \|_{e}^{\mathcal{B}^{v_{\log}}\to \mathcal{B}^{v_{\log}}}
\simeq  \max \left\{\limsup_{n \to \infty}\dfrac{(n+1)\| J_{u}(\varphi^n)\|_{\mathcal{B}^{v_{\log}}}}{\|g_{n+1}\|_{\mathcal{B}^{w_{\log}}}}, \limsup_{n \to \infty} \dfrac{\|I_{u}(\varphi^n)\|_{\mathcal{B}^{v_{\log}}}}{\|g_n\|_{\mathcal{B}^{v_{\log}}}} \right\}.
$$
\end{thm}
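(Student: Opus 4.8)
The plan is to reduce the two-sided estimate to controlling, separately, the two additive constituents of $W_{u,\varphi}$, and to pass freely between the ``monomial'' quantities in the statement and their ``boundary'' counterparts. Differentiating $W_{u,\varphi}(f)=u\cdot(f\circ\varphi)$ gives $(W_{u,\varphi}f)'=u'\,(f\circ\varphi)+u\,\varphi'\,(f'\circ\varphi)$, and since $I_u$ and $J_u$ both vanish at $0$ one obtains $W_{u,\varphi}(f)=J_u(f\circ\varphi)+I_u(f\circ\varphi)+u(0)f(\varphi(0))$. Hence the seminorm $\|W_{u,\varphi}f\|_{\widetilde{\mathcal B}^{v_{\log}}}$ splits into a $J$-term $\sup_{z}v_{\log}(z)|u'(z)|\,|f(\varphi(z))|$ and an $I$-term $\sup_{z}v_{\log}(z)|u(z)|\,|\varphi'(z)|\,|f'(\varphi(z))|$, the constant $u(0)f(\varphi(0))$ being a rank-one term invisible to the essential norm. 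Choosing $f=g_n$ recovers $\|J_u(\varphi^n)\|_{\mathcal B^{v_{\log}}}$ and $\|I_u(\varphi^n)\|_{\mathcal B^{v_{\log}}}$, which is why these functionals govern the estimate.

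Two analytic facts I would isolate as lemmas do the translation work. First, the growth estimates for $f\in\mathcal B^{v_{\log}}$: directly $|f'(z)|\le\|f\|_{\mathcal B^{v_{\log}}}/v_{\log}(z)$, and, after integrating $1/v_{\log}$ along a radius, $|f(z)|\lesssim\|f\|_{\mathcal B^{v_{\log}}}/w_{\log}(z)$ --- the weight $w_{\log}$ is forced precisely because $\int_0^{r}dt/v_{\log}(t)\asymp\log\log\!\big(2/(1-r)\big)\asymp 1/w_{\log}(r)$. Second, the radial sampling comparisons $1/v_{\log}(\zeta)\asymp\sup_{k}|\zeta|^{k}/\|g_k\|_{H_{v_{\log}}^\infty}$ and $1/w_{\log}(\zeta)\asymp\sup_{k}|\zeta|^{k}/\|g_k\|_{H_{w_{\log}}^\infty}$, combined with the bookkeeping identities $\|g_{m+1}\|_{\mathcal B^{w_{\log}}}=(m+1)\|g_m\|_{H_{w_{\log}}^\infty}$ and $\|g_m\|_{\mathcal B^{v_{\log}}}=m\|g_{m-1}\|_{H_{v_{\log}}^\infty}$. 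Feeding these into the $J$- and $I$-terms and interchanging the suprema shows that the two quantities in the theorem are comparable to the boundary expressions
$$
L_J=\limsup_{|\varphi(z)|\to1}\frac{v_{\log}(z)|u'(z)|}{w_{\log}(\varphi(z))},\qquad
L_I=\limsup_{|\varphi(z)|\to1}\frac{v_{\log}(z)|u(z)|\,|\varphi'(z)|}{v_{\log}(\varphi(z))},
$$
and it is these two numbers I would actually estimate from above and below (if $\|\varphi\|_\infty<1$ the operator is compact and every quantity vanishes, so one may assume $\|\varphi\|_\infty=1$).

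For the upper bound I would use the compact operators $W_{u,r\varphi}$, $0<r<1$ (compact because $r\varphi(\mathbb D)$ is relatively compact in $\mathbb D$), which satisfy $W_{u,r\varphi}f=W_{u,\varphi}(f_r)$ with $f_r(z)=f(rz)$, so that $\|W_{u,\varphi}\|_e\le\liminf_{r\to1}\|W_{u,\varphi}-W_{u,r\varphi}\|$. For $\|f\|_{\mathcal B^{v_{\log}}}\le1$ I would bound $\|W_{u,\varphi}(f-f_r)\|_{\widetilde{\mathcal B}^{v_{\log}}}$ by splitting the supremum over $\{|\varphi(z)|\le s\}$ and $\{|\varphi(z)|>s\}$: on the first set $f-f_r$ and $(f-f_r)'$ tend to $0$ uniformly (the unit ball of $\mathcal B^{v_{\log}}$ is a normal family, so $f_r\to f$ uniformly on $\{|w|\le s\}$, uniformly in $f$), while on the second set the growth estimates give a bound by $\sup_{|\varphi(z)|>s}\big[v_{\log}(z)|u'(z)|/w_{\log}(\varphi(z))+v_{\log}(z)|u(z)||\varphi'(z)|/v_{\log}(\varphi(z))\big]$. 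Letting $r\to1$ and then $s\to1$ yields $\|W_{u,\varphi}\|_e\lesssim\max\{L_J,L_I\}$.

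For the lower bound I would exploit that a compact $K$ sends any norm-bounded, uniformly-on-compacta null sequence $(f_k)$ to a norm-null sequence, whence $\|W_{u,\varphi}\|_e\gtrsim\limsup_k\|W_{u,\varphi}f_k\|_{\mathcal B^{v_{\log}}}$. Choosing $z_k$ with $|\varphi(z_k)|\to1$ that realize $L_I$ and taking the normalized monomials $f_k=g_{n_k}/\|g_{n_k}\|_{\mathcal B^{v_{\log}}}$ with $n_k\asymp1/(1-|\varphi(z_k)|)$ --- whose derivative peaks at size $\asymp1/v_{\log}(\varphi(z_k))$ on the relevant circle --- produces $\|W_{u,\varphi}\|_e\gtrsim L_I$. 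For $L_J$ the monomials are useless (their values at $\varphi(z_k)$ fall far below the admissible size $1/w_{\log}$), so here I would construct a bespoke family $f_k\in\mathcal B^{v_{\log}}$, bounded in norm and uniformly-on-compacta null, with $|f_k(\varphi(z_k))|\asymp1/w_{\log}(\varphi(z_k))$ and, crucially, $f_k'(\varphi(z_k))=0$, obtained by localizing a primitive of $1/v_{\log}$ near $\varphi(z_k)$ and correcting the derivative; this gives $\|W_{u,\varphi}\|_e\gtrsim L_J$, and combining yields $\gtrsim\max\{L_J,L_I\}$. The main obstacle is exactly this $L_J$ family: one must simultaneously attain the maximal $\log\log$ growth of $|f_k|$ at $\varphi(z_k)$, kill the derivative there so the $I$-term cannot contaminate the estimate, and keep the $\mathcal B^{v_{\log}}$-norm bounded while preserving weak nullity --- and then verify, through the sampling lemma, that the resulting boundary bound is genuinely equivalent to the monomial $\limsup$ displayed in the theorem.
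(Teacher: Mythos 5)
Your proposal follows essentially the same architecture as the paper's proof: the same splitting of $(W_{u,\varphi}f)'=u'\,(f\circ\varphi)+u\varphi'\,(f'\circ\varphi)$ into a $J$- and an $I$-term; the same upper bound via compact dilations (your $W_{u,r\varphi}=W_{u,\varphi}K_r$ with the supremum split over $\{|\varphi(z)|\le s\}$ and $\{|\varphi(z)|>s\}$ is exactly the paper's Theorem 3.1); the same dictionary between the boundary quantities $L_J,L_I$ and the monomial limsups (the paper outsources this to Montes-Rodr\'iguez and to Hyv\"arinen et al., using that $w_{\log}$ and $v_3\simeq v_{\log}$ are essential weights --- your ``radial sampling comparison'' is that same fact and should be cited or reproved, not assumed); and the same lower-bound philosophy of norm-bounded, weakly null peak families killed by compact operators. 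The packaging differs: the paper first proves a triangle inequality $\|W_{u,\varphi}\|_e\le\|W_{u',\varphi}\|_e+\|W_{u\varphi',\varphi}\|_e$ through the isometry $D:\widetilde{\mathcal B}^{\mu}\to H^{\infty}_{\mu}$, plus a separate lemma reducing to the subspace $\widetilde{\mathcal B}^{\mu_1}$, whereas you fold both terms into one direct computation; that is equivalent, and your rank-one remark correctly replaces the paper's Lemma 2.2.

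Two points in your lower bound need completion. First, for $L_I$ you test with normalized monomials, which, unlike the paper's functions $h_n$ (built so that $h_n(\varphi(z_n))=0$ and $h_n'(\varphi(z_n))\asymp 1/v_{\log}(\varphi(z_n))$), do not vanish at $\varphi(z_k)$; the $J$-contamination $v_{\log}(z_k)|u'(z_k)||\varphi(z_k)|^{n_k}/\|g_{n_k}\|_{\mathcal B^{v_{\log}}}$ must be shown to vanish. It does, since $v_{\log}(z_k)|u'(z_k)|\le\|u\|_{\mathcal B^{v_{\log}}}$ (as $u=W_{u,\varphi}(\mathbf 1)$) while $\|g_{n_k}\|_{\mathcal B^{v_{\log}}}\asymp\log n_k\to\infty$, but this step is not in your sketch and is precisely what the paper's choice of $h_n$ avoids. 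Second, the family you flag as ``the main obstacle'' for $L_J$ is exactly Ye's sequence, which the paper imports:
$$
f_n(z)=\frac{3}{a_n}\left[\log\log\left(\frac{4}{1-\overline{\varphi(z_n)}z}\right)\right]^2-\frac{2}{a_n^2}\left[\log\log\left(\frac{4}{1-\overline{\varphi(z_n)}z}\right)\right]^3,\qquad a_n=\log\log\frac{4}{1-|\varphi(z_n)|^2};
$$
your ``primitive of $1/v_{\log}$'' is $\Lambda(z)=\log\log(4/(1-\overline{\varphi(z_n)}z))$, and the derivative-killing correction is the Hermite-type polynomial $3t^2-2t^3$ applied to $\Lambda/a_n$, giving $f_n(\varphi(z_n))=a_n\asymp 1/w_{\log}(\varphi(z_n))$ and $f_n'(\varphi(z_n))=0$ as required. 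The genuinely delicate step --- uniform boundedness of $\|f_n\|_{\mathcal B^{v_{\log}}}$, which forces one to compare $|\log\log(4/(1-\overline{\varphi(z_n)}z))|$ with $\log\log(4/|1-\overline{\varphi(z_n)}z|)$ for complex arguments (the paper does this via an explicit limit involving $4\pi^2$ and the monotonicity of $t\log(2/t)$) --- is left open in your sketch; so the plan is sound and completable, but this verification (or a citation to Ye) is what separates your outline from a proof.
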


 Above, and in what follows, for two positive quantities $A$ and $B$, we write $A\simeq B$ and say that $A$ {\it is equivalent to} $B$ if and only if there is a positive constant $K$, independent on $A$ and $B$, such that $\frac{1}{K}\, A \leq B \leq K\, A$. To show Theorem \ref{teo1}, we establish, in Section 2, a triangle inequality which reduce our problem to estimate the essential norm of $W_{u', \varphi}: \mathcal{B}^{v_{\log}} \to H_{v_{\log}}^{\infty}$. Such estimation is found in Section 3 in terms of the essential norm of $W_{u,\varphi}:H_{v_{\log}}^{\infty} \to H_{w_{\log}}^{\infty}$. Finally, in Section 4, we show Theorem \ref{teo1}. As a consequence of our results, in Section 5, we characterize continuity, compactness and we estimate the essential norm of the composition operator $C_\varphi$ acting on the logarithmic-Zygmund space.

\vspace{0.2cm}
We want finish this introduction by mentioning that throughout this paper,
constants are denoted by $C$ or $C_v$ (if depending only on $v$), they are positive and
may differ from one occurrence to the other.

%%%%%%%%%%%%%%%%%%%%%%%%%%%%
\section{A triangle inequality for the essential norm}
%%%%%%%%%%%%%%%%%%%%%%%%%%%%%%%%%%%%%%%%%%%%%%%%%%%%%%%%%%%%%%%%
Let  $\mu_1$ and  $\mu_2$ be two weights defined on  $\Bbb D$. In this section we find upper bound for the essential norm of the operator  $W_{u, \varphi}:\mathcal{B}^{\mu_1}\to \mathcal{B}^{\mu_2}$ in terms of the essential norm of the operators  $W_{u', \varphi}:\mathcal{B}^{\mu_1}\to H_{\mu_{2}}^{\infty}$ and  $W_{u\varphi', \varphi}:H_{\mu_{1}}^{\infty}\to H_{\mu_{2}}^{\infty}$. To this end, for a weight  $\mu$ we set the class
$$
\widetilde{\mathcal{B}}^{\mu}=\{f \in  \mathcal{B}^{\mu}: f(0)=0\},
$$
which is a closed subspace of  $\mathcal{B}^{\mu}$. With this notation, we have the following result:

\begin{lem}\label{lem2}
If the operator   $W_{u, \varphi}:\widetilde{\mathcal{B}}^{\mu_1}\to \mathcal{B}^{\mu_2}$  is continuous, then
\begin{equation}\label{ec:2}
\|W_{u, \varphi} \|_{e}^{\widetilde{\mathcal{B}}^{\mu_1}\to \mathcal{B}^{\mu_2}} \leq \|W_{u', \varphi} \|_{e}^{\widetilde{\mathcal{B}}^{\mu_1}\to H_{\mu_{2}}^{\infty}}+\| W_{u\varphi', \varphi}\|_{e}^{H_{\mu_{1}}^{\infty}\to H_{\mu_{2}}^{\infty}}.
\end{equation}
\end{lem}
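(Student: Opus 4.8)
The key observation is the product rule for the weighted composition operator. For $f \in \widetilde{\mathcal{B}}^{\mu_1}$, differentiating $W_{u,\varphi}(f) = u \cdot (f \circ \varphi)$ gives
$$
\left(W_{u,\varphi}(f)\right)' = u' \cdot (f \circ \varphi) + u \cdot (f' \circ \varphi)\cdot \varphi' = W_{u',\varphi}(f) + W_{u\varphi',\varphi}(f').
$$
Since the $\mathcal{B}^{\mu_2}$-seminorm $\|\cdot\|_{\widetilde{\mathcal{B}}^{\mu_2}}$ measures exactly $\sup_z \mu_2(z)|g'(z)|$, which is the $H_{\mu_2}^\infty$-norm of the derivative, this identity splits the operator into two pieces whose ranges naturally live in growth spaces. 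The plan is to exploit this additive decomposition at the level of approximating operators and pass to the essential norm.

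First I would fix $\eps > 0$ and choose compact operators $K_1 : \widetilde{\mathcal{B}}^{\mu_1} \to H_{\mu_2}^\infty$ and $K_2 : H_{\mu_1}^\infty \to H_{\mu_2}^\infty$ that nearly realize the two essential norms on the right-hand side of \eqref{ec:2}, i.e.\ $\|W_{u',\varphi} - K_1\| < \|W_{u',\varphi}\|_e^{\widetilde{\mathcal{B}}^{\mu_1}\to H_{\mu_2}^\infty} + \eps$ and similarly for $K_2$. The idea is then to assemble from $K_1$ and $K_2$ a single compact operator $K : \widetilde{\mathcal{B}}^{\mu_1} \to \mathcal{B}^{\mu_2}$ approximating $W_{u,\varphi}$. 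The natural candidate is built by integration: since a function in $\widetilde{\mathcal{B}}^{\mu_2}$ vanishes at $0$ and is recovered from its derivative via $g(z) = \int_0^z g'(s)\,ds$, I would define $K$ so that $(Kf)' = K_1 f + K_2(f')$, with $(Kf)(0) = 0$. Compactness of $K$ follows because the derivative map $g \mapsto g'$ is an isometric identification between $\widetilde{\mathcal{B}}^{\mu_2}$ and $H_{\mu_2}^\infty$, the map $f \mapsto f'$ is bounded from $\widetilde{\mathcal{B}}^{\mu_1}$ into $H_{\mu_1}^\infty$, and $K_1, K_2$ are compact, so $K$ is a sum of compositions of bounded maps with compact maps.

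With $K$ in hand, I would estimate $\|W_{u,\varphi} - K\|^{\widetilde{\mathcal{B}}^{\mu_1} \to \mathcal{B}^{\mu_2}}$. For $f$ in the unit ball of $\widetilde{\mathcal{B}}^{\mu_1}$, the quantity $(W_{u,\varphi}f - Kf)(0)$ should be arranged to vanish (or be controlled), so that the full $\mathcal{B}^{\mu_2}$-norm reduces to the seminorm $\|W_{u,\varphi}f - Kf\|_{\widetilde{\mathcal{B}}^{\mu_2}} = \|(W_{u,\varphi}f - Kf)'\|_{H_{\mu_2}^\infty}$. Using the product-rule identity and the definition of $K$, this derivative equals $\bigl(W_{u',\varphi}f - K_1 f\bigr) + \bigl(W_{u\varphi',\varphi}(f') - K_2(f')\bigr)$, whence the triangle inequality in $H_{\mu_2}^\infty$ gives a bound by $\|W_{u',\varphi} - K_1\| + \|W_{u\varphi',\varphi} - K_2\|\cdot\|f'\|_{H_{\mu_1}^\infty}$. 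Since $\|f'\|_{H_{\mu_1}^\infty} = \|f\|_{\widetilde{\mathcal{B}}^{\mu_1}} \le 1$ on the unit ball, taking the supremum and then letting $\eps \to 0$ yields \eqref{ec:2}.

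The main technical obstacle I anticipate is the bookkeeping around the point evaluation at $0$: the target norm on $\mathcal{B}^{\mu_2}$ carries the extra term $|(W_{u,\varphi}f)(0)|$, whereas the two growth-space essential norms only see the derivative. I would handle this by choosing the constant of integration defining $K$ so that $(Kf)(0) = (W_{u,\varphi}f)(0)$ — but this requires $f \mapsto (W_{u,\varphi}f)(0) = u(0) f(\varphi(0))$ to factor through a compact (indeed finite-rank) operator, which it does since it is a rank-one functional times a constant. Absorbing this rank-one piece into $K$ keeps $K$ compact and kills the point-evaluation term exactly, leaving only the seminorm to estimate. Verifying that each assembled map is genuinely compact, and that the norm bounds survive the $\limsup$-free $\eps$-limit, is routine once the decomposition is set up correctly.
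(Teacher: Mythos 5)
Your proposal is correct and follows essentially the same route as the paper: the same product-rule identity $(W_{u,\varphi}f)' = W_{u',\varphi}(f) + W_{u\varphi',\varphi}(f')$, transported through the isometric identification of $\widetilde{\mathcal{B}}^{\mu}$ with $H_{\mu}^{\infty}$ via differentiation (your antiderivative construction of $K$ is exactly the paper's compact operator $D_{\mu_2}^{-1}\mathcal{K}_1 + D_{\mu_2}^{-1}\mathcal{K}_2 D_{\mu_1}$), followed by near-optimal compact approximants and the triangle inequality. If anything, your explicit rank-one correction for the constant term $(W_{u,\varphi}f)(0) = u(0)f(\varphi(0))$ is slightly more careful than the paper, which writes $W_{u,\varphi} = D_{\mu_2}^{-1}W_{u',\varphi} + D_{\mu_2}^{-1}W_{u\varphi',\varphi}D_{\mu_1}$ without accounting for that (compact, hence harmless) evaluation term.
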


\begin{proof}
  Let us suppose that the operator $W_{u, \varphi}:\widetilde{\mathcal{B}}^{\mu_1}\to \mathcal{B}^{\mu_2}$ is continuous, then the continuous composition $D_{\mu_{2}}W_{u,\varphi}D_{\mu_1}^{-1}$ maps the space  $H_{\mu_1}^{\infty}$ into $H_{\mu_2}^{\infty}$, where $D_{\mu_{1}}:\widetilde{\mathcal{B}}^{\mu_1}\to H_{\mu_{1}}^{\infty}$  and   $D_{\mu_{2}}: \widetilde{\mathcal{B}}^{\mu_{2}}\to H_{\mu_{2}}^{\infty}$ denote the linear operators which transforms  each  $f \in H(\mathbb{D})$ into its derivative  $f'$. Clearly, the operators $D_{\mu_{1}}$ and    $D_{\mu_{2}}$ are isometry and therefore they are invertibles  with norms equal to 1. Furthermore, for every a $f\in H_{\mu_{1}}^{\infty}$ we have the relation
  $$
     D_{\mu_{2}}W_{u,\varphi}D_{\mu_1}^{-1}(f) = W_{u', \varphi}D_{\mu_1}^{-1}(f)+W_{u \varphi', \varphi}(f).
  $$
  From the above relation, we deduce an expression  for the operator $W_{u,\varphi}: \widetilde{\mathcal{B}}^{\mu_1}\to \mathcal{B}^{\mu_2}$ in terms of  $D_{\mu_{1}}$ and  $D_{\mu_{2}}$, that is,
\begin{eqnarray}\label{ec:1}
  W_{u,\varphi}: \widetilde{\mathcal{B}}^{\mu_1} &\to& \mathcal{B}^{\mu_2} \nonumber \\
  W_{u,\varphi}(f) &=& D_{\mu_2}^{-1}W_{u',\varphi}(f) +D_{\mu_2}^{-1}W_{u \varphi', \varphi}D_{\mu_1}(f).
\end{eqnarray}
Now, by definition of essential norm, given  $\epsilon>0$, we can find compact operators $\mathcal{K}_1:\widetilde{\mathcal{B}}^{\mu_1}\to H_{\mu_{2}}^{\infty}$ and
$\mathcal{K}_2: H_{\mu_{1}}^{\infty}\to H_{\mu_{2}}^{\infty}$ such that

\vspace{0.2cm}
\noindent $\displaystyle{
\|W_{u', \varphi} \|_{e}^{\widetilde{\mathcal{B}}^{\mu_{1}}\to H_{\mu_{2}}^{\infty}}+\| W_{ u \varphi', \varphi}\|_{e}^{H_{\mu_{1}}^{\infty}\to H_{\mu_{2}}^{\infty}}}
$
$$
 \geq \frac{1}{1+\epsilon}\left(\|W_{u', \varphi}-\mathcal{K}_1 \|^{\widetilde{\mathcal{B}}^{\mu_{1}}\to H_{\mu_{2}}^{\infty}}
+\| W_{u \varphi', \varphi}-\mathcal{K}_2\|^{H_{\mu_{1}}^{\infty}\to H_{\mu_{2}}^{\infty}}\right).
$$
Thus, since the operators  $D_{\mu_{1}}$ and  $D_{\mu_{2}}$ are isometries, we can write

\vspace{0.2cm}
\noindent $\displaystyle{
\|W_{u', \varphi} \|_{e}^{\widetilde{\mathcal{B}}^{\mu_{1}}\to H_{\mu_{2}}^{\infty}}+\| W_{ u \varphi', \varphi}\|_{e}^{H_{\mu_{1}}^{\infty}\to H_{\mu_{2}}^{\infty}}}
$
\begin{eqnarray*}
 &\geq&\frac{1}{1+\epsilon}\left( \|D_{\mu_2}^{-1}W_{u', \varphi}-D_{\mu_2}^{-1}\mathcal{K}_1 \|^{\widetilde{\mathcal{B}}^{\mu_1}\to \mathcal{B}^{\mu_2}}+
\|D_{\mu_2}^{-1} W_{u \varphi', \varphi} -D_{\mu_2}^{-1}\mathcal{K}_2\|^{H_{\mu_1}^{\infty}\to \mathcal{B}^{\mu_2}}   \right)\\
&\geq& \frac{1}{1+\epsilon}\left( \|D_{\mu_2}^{-1}W_{u', \varphi}-D_{\mu_2}^{-1}\mathcal{K}_1 \|^{\widetilde{\mathcal{B}}^{\mu_1}\to \mathcal{B}^{\mu_2}}+
\|D_{\mu_2}^{-1} W_{u \varphi', \varphi} D_{\mu_1} -D_{\mu_2}^{-1}\mathcal{K}_2D_{\mu_1}\|^{\widetilde{\mathcal{B}}^{\mu_1}\to \mathcal{B}^{\mu_2}}   \right)\\
&\geq& \frac{1}{1+\epsilon}\|D_{\mu_2}^{-1}W_{u', \varphi}+D_{\mu_2}^{-1} W_{u \varphi', \varphi}D_{\mu_1}-(D_{\mu_2}^{-1}\mathcal{K}_1+
D_{\mu_2}^{-1}\mathcal{K}_2D_{\mu_1}) \|^{\widetilde{\mathcal{B}}^{\mu_1}\to \mathcal{B}^{\mu_2}}\\
&=&\frac{1}{1+\epsilon}\|W_{u, \varphi}-\mathcal{K} \|^{\widetilde{\mathcal{B}}^{\mu_1}\to \mathcal{B}^{\mu_2}}\geq \frac{1}{1+\epsilon}\|W_{u,\varphi} \|_{e}^{\widetilde{\mathcal{B}}^{\mu_1}\to \mathcal{B}^{\mu_2}},
\end{eqnarray*}
where we have used  (\ref{ec:1}) in the last equality and the fact that the operator $\mathcal{K}=D_{\mu_2}^{-1}\mathcal{K}_1+D_{\mu_2}^{-1}\mathcal{K}_2D_{\mu_1}:\widetilde{\mathcal{B}}^{\mu_1}\to \mathcal{B}^{\mu_2}$ is compact since the composition of compact and continuous operators is a compact operator. The result follows because  $\epsilon>0$ was arbitrary.
\end{proof}

 Now, we show that the essential norm of the weighted composition operator, acting on Bloch-type spaces, does not change if we restrict the domain to $\widetilde{\mathcal{B}}^\mu$.

\begin{lem}\label{le-es-rest}
If  $ W_{u, \varphi}:\mathcal{B}^{\mu_1}\to \mathcal{B}^{\mu_2}$ is continuous, then
$$
\| W_{u, \varphi} \|_{e}^{\mathcal{B}^{\mu_1}\to \mathcal{B}^{\mu_2}}=\| W_{u, \varphi} \|_{e}^{\mathcal{\widetilde{B}}^{\mu_1}\to \mathcal{B}^{\mu_2}}.
$$
\end{lem}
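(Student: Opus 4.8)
The plan is to prove the two inequalities separately, using that $\widetilde{\mathcal{B}}^{\mu_1}$ is a closed, codimension-one subspace of $\mathcal{B}^{\mu_1}$, its complement being the constant functions. The inequality $\|W_{u, \varphi}\|_{e}^{\widetilde{\mathcal{B}}^{\mu_1}\to\mathcal{B}^{\mu_2}}\le\|W_{u, \varphi}\|_{e}^{\mathcal{B}^{\mu_1}\to\mathcal{B}^{\mu_2}}$ is the easy direction: for any compact operator $K:\mathcal{B}^{\mu_1}\to\mathcal{B}^{\mu_2}$, its restriction to $\widetilde{\mathcal{B}}^{\mu_1}$ is again compact, and restricting an operator to a subspace cannot increase its norm, so that $\|(W_{u, \varphi}-K)|_{\widetilde{\mathcal{B}}^{\mu_1}}\|\le\|W_{u, \varphi}-K\|^{\mathcal{B}^{\mu_1}\to\mathcal{B}^{\mu_2}}$; taking the infimum over all such $K$ gives the claim.

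For the substantive direction, I would introduce the bounded projection $P:\mathcal{B}^{\mu_1}\to\widetilde{\mathcal{B}}^{\mu_1}$ defined by $Pf=f-f(0)$. Since $(Pf)(0)=0$ and $(Pf)'=f'$, one computes $\|Pf\|_{\mathcal{B}^{\mu_1}}=\|f'\|_{H_{\mu_1}^{\infty}}\le\|f\|_{\mathcal{B}^{\mu_1}}$, so that $\|P\|\le 1$. Writing $I-P$ for the complementary map $f\mapsto f(0)$ into the constants, I note that $W_{u, \varphi}(I-P)f=f(0)\,u$ is a rank-one operator, well defined because $u=W_{u, \varphi}(1)\in\mathcal{B}^{\mu_2}$ by continuity of $W_{u, \varphi}$, and hence compact; call it $R$. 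On $\mathcal{B}^{\mu_1}$ we then have the decomposition $W_{u, \varphi}=W_{u, \varphi}P+R$.

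Given any compact operator $K:\widetilde{\mathcal{B}}^{\mu_1}\to\mathcal{B}^{\mu_2}$, I would set $\widetilde{K}=KP+R$, which is compact on $\mathcal{B}^{\mu_1}$ since $KP$ is the composition of a bounded operator with a compact one and $R$ is compact. Then $W_{u, \varphi}-\widetilde{K}=(W_{u, \varphi}-K)P$, using that $W_{u, \varphi}$ agrees with its restriction on the range of $P$, and therefore $\|W_{u, \varphi}-\widetilde{K}\|^{\mathcal{B}^{\mu_1}\to\mathcal{B}^{\mu_2}}\le\|W_{u, \varphi}-K\|^{\widetilde{\mathcal{B}}^{\mu_1}\to\mathcal{B}^{\mu_2}}\|P\|\le\|W_{u, \varphi}-K\|^{\widetilde{\mathcal{B}}^{\mu_1}\to\mathcal{B}^{\mu_2}}$. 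Taking the infimum over $K$ yields $\|W_{u, \varphi}\|_{e}^{\mathcal{B}^{\mu_1}\to\mathcal{B}^{\mu_2}}\le\|W_{u, \varphi}\|_{e}^{\widetilde{\mathcal{B}}^{\mu_1}\to\mathcal{B}^{\mu_2}}$, which together with the easy direction gives equality.

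The main obstacle is precisely this second direction, where one must lift a near-optimal compact approximation from the subspace back to the full space without losing control of the operator norm. This is exactly what the norm-one projection $P$ and the compactness of the rank-one piece $R$ deliver, so the only technical points to verify are the estimate $\|P\|\le 1$ and the compactness (indeed finite rank) of $R$, both of which are immediate from the definition of the $\mathcal{B}^{\mu}$-norm and the continuity of $W_{u, \varphi}$.
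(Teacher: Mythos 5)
Your proof is correct, and it streamlines the hard direction relative to the paper's argument. The underlying decomposition is the same in both proofs --- split $f$ as $f(0)\mathbf{1}+(f-f(0))$, i.e., constants plus $\widetilde{\mathcal{B}}^{\mu_1}$ --- but the mechanisms differ. The paper (following Esmaeili--Lindstr\"om) first bounds $\|W_{u,\varphi}-T\|^{\mathcal{B}^{\mu_1}\to\mathcal{B}^{\mu_2}}$ by the sum of two restricted norms, then, given $\epsilon>0$, selects near-optimal compact operators $P$ on $\widetilde{\mathcal{B}}^{\mu_1}$ and $Q$ on the space $\mathcal{A}$ of constants, glues them into the compact operator $Tf=f(0)Q(\mathbf{1})+P(g)$ (with compactness verified by a Bolzano--Weierstrass sequential argument), and so obtains the subadditivity $\|W_{u,\varphi}\|_{e}^{\mathcal{B}^{\mu_1}\to\mathcal{B}^{\mu_2}}\le\|W_{u,\varphi}\|_{e}^{\widetilde{\mathcal{B}}^{\mu_1}\to\mathcal{B}^{\mu_2}}+\|W_{u,\varphi}\|_{e}^{\mathcal{A}\to\mathcal{B}^{\mu_2}}$, finishing by observing that the last term is zero because $W_{u,\varphi}|_{\mathcal{A}}$ is compact. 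You short-circuit this: in effect you choose $Q=W_{u,\varphi}|_{\mathcal{A}}$ itself, so your $\widetilde{K}=KP+R$ with $Rf=f(0)\,u$ makes the constant-part error vanish identically, and the operator identity $W_{u,\varphi}-\widetilde{K}=(W_{u,\varphi}-K)P$ together with $\|P\|\le 1$ yields the sharp inequality with no $\epsilon$-bookkeeping and no separate sequential compactness check beyond ``bounded composed with compact, plus rank one.'' What the paper's longer route buys is the general subadditivity of the essential norm over a direct-sum decomposition, which remains usable when the restriction to the complementary summand is merely approximable by compacts rather than itself compact; here that extra generality is unused, and your direct absorption of the rank-one piece is the more economical argument. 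The small verifications you flag do hold: $\|P\|\le 1$ is immediate from the norm $|f(0)|+\|f'\|_{H_{\mu_1}^{\infty}}$, and $u=W_{u,\varphi}(\mathbf{1})\in\mathcal{B}^{\mu_2}$ by continuity; your easy direction matches the paper's verbatim.
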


\begin{proof}
This proof uses similar arguments given in \cite{esmaeili-lindstrom}, (see Lemma 3.1). Since  $\widetilde{\mathcal{B}}^{\mu_1}\subseteq \mathcal{B}^{\mu_1}$, it is clear that all compact operator from $\mathcal{B}^{\mu_1}$ into $\mathcal{B}^{\mu_2}$ is also a compact operator from $\widetilde{\mathcal{B}}^{\mu_1}$ into $\mathcal{B}^{\mu_2}$; hence we have
$$
\|W_{u, \varphi} \|_{e}^{\mathcal{B}^{\mu_1}\to \mathcal{B}^{\mu_2}}\ge\|W_{u, \varphi} \|_{e}^{\mathcal{\widetilde{B}}^{\mu_1}\to \mathcal{B}^{\mu_2}}.
$$
To show the reverse inequality, we observe that if   $T: \mathcal{B}^{\mu_1}\to \mathcal{B}^{\mu_2}$ is any compact operator, then  we can write

\vspace{0.2cm}\noindent
$\displaystyle{
\| W_{u, \varphi}-T\|^{\mathcal{B}^{\mu_1}\to \mathcal{B}^{\mu_2}}=
\sup_{\|f \|_{\mathcal{B}^{\mu_1}\leq 1}}
\| W_{u, \varphi}(f)-T(f)\|_{\mathcal{B}^{\mu_2}}}
$
\begin{eqnarray*}
&=&\sup_{\|f \|_{\mathcal{B}^{\mu_1}\leq 1}} \| W_{u, \varphi}(f)+f(0)W_{u, \varphi}(\mathbf{1})-f(0)W_{u,\varphi}(\mathbf{1})-T(f-f(0)\mathbf{1}+f(0)\mathbf{1})\|_{\mathcal{B}^{\mu_2}}\\
&\leq& \sup_{\|f \|_{\mathcal{B}^{\mu_1}\leq 1}} \| W_{u, \varphi}(f-f(0)\mathbf{1}) -T|_{\widetilde{\mathcal{B}}^{\mu_1}}(f-f(0)\mathbf{1})\|_{\mathcal{B}^{\mu_2}}
+\sup_{\|f \|_{\mathcal{B}^{\mu_1}\leq 1}}\|W_{u, \varphi}(f(0)\mathbf{1})-T(f(0)\mathbf{1}) \|_{\mathcal{B}^{\mu_2}}\\
&\leq& \sup_{g \in \widetilde{\mathcal{B}}^{\mu_1},\,\|g \|_{\mathcal{B}^{\mu_1}\leq 1} }\|W_{u, \varphi}(g) -T|_{\widetilde{\mathcal{B}}^{\mu_1}}(g)\|_{\mathcal{B}^{\mu_2}}
+ \sup_{h \in \mathcal{A},\,\|h \|_{\mathcal{B}^{\mu_1}}\leq 1}\|W_{u, \varphi}(h)-T|_{\mathcal{A}}(h) \|_{\mathcal{B}^{\mu_2}}\\
&=&\| W_{u, \varphi}-P_1\|^{\widetilde{\mathcal{B}}^{\mu_1}\to \mathcal{B}^{\mu_2}}+\| W_{u, \varphi}-Q_1\|^{\mathcal{A}\to \mathcal{B}^{\mu_2}},
\end{eqnarray*}
where  $\mathbf{1}(z)=1$ for all $z\in\Bbb D$, $\mathcal{A}$ denotes the space of all constant functions in  $\mathcal{B}^{\mu_1}$, $P_1=T|_{\widetilde{\mathcal{B}}^{\mu_1}}$ and $Q_1=T|_{\mathcal{A}}$.

\vspace{0.2cm}
On the other hand, by definition of essential norm, given $\epsilon>0$, we can find compact operators   $P: \widetilde{\mathcal{B}}^{\mu_1} \to \mathcal{B}^{\mu_2}$ and $Q:\mathcal{A} \to \mathcal{B}^{\mu_2}$ such that
$$
\left\|W_{u, \varphi}-P \right\|^{\widetilde{\mathcal{B}}^{\mu_1} \to \mathcal{B}^{\mu_2}}+\left\|W_{u, \varphi}-Q \right\|^{\mathcal{A} \to \mathcal{B}^{\mu_2}}
\leq (1+\epsilon)\left(\|W_{u, \varphi} \|_{e}^{\widetilde{\mathcal{B}}^{\mu_{1}} \to \mathcal{B}^{\mu_{2}}}
+\|W_{u, \varphi} \|_{e}^{\mathcal{A} \to \mathcal{B}^{\mu_{2}}}\right).
$$
But each  $f\in \mathcal{B}^{\mu_{1}}$ can be written as $f=f(0)\cdot\mathbf{1} + g$, where  $g\in \widetilde{\mathcal{B}}^{\mu_{1}}$.  Furthermore, we can see that if  $f\in \mathcal{A}$ then  $g$ is the null function and if  $f\in \widetilde{\mathcal{B}}^{\mu_{1}}$ then  $f=g$. Thus, we can define the operator  $T: \mathcal{B}^{\mu_{1}}\to \mathcal{B}^{\mu_{2}}$ by
$$
 Tf = f(0) Q\left(\mathbf{1}\right) + P(g),\hspace{0.2cm}(f=f(0)\cdot \mathbf{1} + g\in \mathcal{B}^{\mu_{1}}).
$$
Clearly,  $T$ is linear and compact operator from $\mathcal{B}^{\mu_{1}}$ into $\mathcal{B}^{\mu_{2}}$. Indeed, if  $\left\{f_k\right\}$ is a bounded sequence in  $\mathcal{B}^{\mu_1}$, then there exists a bounded sequence $\left\{g_k\right\}$ in $\widetilde{\mathcal{B}}^{\mu_{1}}$ such that
$$f_k(z)=f_k(0)+g_k(z)$$
for all  $z \in \mathbb{D}$.  Bolzano-Weierstrass's theorem tell us that the numerical sequence  $\left\{f_k(0)\right\}$ has a convergent subsequence and since the operator  $P: \widetilde{\mathcal{B}}^{\mu_1} \to \mathcal{B}^{\mu_2}$ is compact, the sequence  $\left\{P\left(g_k\right)\right\}$ also has a convergent subsequence in   $\mathcal{B}^{\mu_2}$. Hence  $\left\{T\left(f_k\right)\right\}$  has a convergent subsequence in   $\mathcal{B}^{\mu_2}$ and $T: \mathcal{B}^{\mu_{1}}\to \mathcal{B}^{\mu_{2}}$ is compact as was claimed.

\vspace{0.2cm}
Now, since  $T|_{\widetilde{\mathcal{B}}^{\mu_{1}}}= P$ and $T|_{\mathcal{A}}=Q$. We obtain that
\begin{eqnarray*}
\|W_{u, \varphi} \|_{e}^{\mathcal{B}^{\mu_{1}}\to \mathcal{B}^{\mu_{2}}}
&\leq& \| W_{u, \varphi}-T\|^{\mathcal{B}^{\mu_{1}}\to \mathcal{B}^{\mu_{2}}}\\
&\leq&\|W_{u, \varphi}-P \|^{\widetilde{\mathcal{B}}^{\mu_{1}} \to \mathcal{B}^{\mu_{2}}}+\|W_{u, \varphi}-Q \|^{\mathcal{A} \to \mathcal{B}^{\mu_{2}}}\\
&\leq& (1+\epsilon)\left(\|W_{u, \varphi} \|_{e}^{\widetilde{\mathcal{B}}^{\mu_{1}} \to \mathcal{B}^{\mu_{2}}}
+\|W_{u, \varphi} \|_{e}^{\mathcal{A} \to \mathcal{B}^{\mu_{2}}}\right).
\end{eqnarray*}
and since  $\epsilon>0$ was arbitrary, we conclude that
$$
\|W_{u, \varphi} \|_{e}^{\mathcal{B}^{\mu_{1}}\to \mathcal{B}^{\mu_{2}}}\leq \|W_{u, \varphi} \|_{e}^{\widetilde{\mathcal{B}}^{\mu_{1}} \to \mathcal{B}^{\mu_{2}}}+\|W_{u, \varphi} \|_{e}^{\mathcal{A} \to \mathcal{B}^{\mu_{2}}}
$$
and the result follows since the operator  $W_{u, \varphi}: \mathcal{A} \to \mathcal{B}^{\mu_{2}}$ is compact and therefore its essential norm is zero.
\end{proof}

As a consequence of Lemmas \ref{lem2} and  \ref{le-es-rest} we have the following result:
\begin{thm}\label{th-cota-sup-norm-es}
If the operator   $W_{u, \varphi}:\widetilde{\mathcal{B}}^{\mu_1}\to \mathcal{B}^{\mu_2}$ is continuous, then
\begin{equation}\label{ec:2}
\|W_{u, \varphi} \|_{e}^{\mathcal{B}^{\mu_1}\to \mathcal{B}^{\mu_2}} \leq \|W_{u', \varphi} \|_{e}^{\widetilde{\mathcal{B}}^{\mu_1}\to H_{\mu_{2}}^{\infty}}+\| W_{u\varphi', \varphi}\|_{e}^{H_{\mu_{1}}^{\infty}\to H_{\mu_{2}}^{\infty}}.
\end{equation}
\end{thm}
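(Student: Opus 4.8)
Theorem \ref{th-cota-sup-norm-es} is an immediate corollary of the two lemmas that precede it, so my plan is simply to chain them together with the correct identification of which essential norm sits on the left-hand side. The statement I want is a bound on $\|W_{u,\varphi}\|_e^{\mathcal{B}^{\mu_1}\to\mathcal{B}^{\mu_2}}$, i.e.\ the essential norm with the full Bloch-type space as domain, whereas Lemma \ref{lem2} is phrased for the restricted domain $\widetilde{\mathcal{B}}^{\mu_1}$. Lemma \ref{le-es-rest} is precisely the bridge between these two: it asserts the equality
$$
\|W_{u,\varphi}\|_e^{\mathcal{B}^{\mu_1}\to\mathcal{B}^{\mu_2}}=\|W_{u,\varphi}\|_e^{\widetilde{\mathcal{B}}^{\mu_1}\to\mathcal{B}^{\mu_2}}.
$$

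The plan is therefore to start from this equality, supplied by Lemma \ref{le-es-rest}, and then apply the upper bound of Lemma \ref{lem2} to the right-hand side. Concretely, I would write the one-line chain
$$
\|W_{u,\varphi}\|_e^{\mathcal{B}^{\mu_1}\to\mathcal{B}^{\mu_2}}
=\|W_{u,\varphi}\|_e^{\widetilde{\mathcal{B}}^{\mu_1}\to\mathcal{B}^{\mu_2}}
\le \|W_{u',\varphi}\|_e^{\widetilde{\mathcal{B}}^{\mu_1}\to H_{\mu_2}^{\infty}}
+\|W_{u\varphi',\varphi}\|_e^{H_{\mu_1}^{\infty}\to H_{\mu_2}^{\infty}},
$$
where the first equality is Lemma \ref{le-es-rest} and the inequality is \eqref{ec:2} from Lemma \ref{lem2}. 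This is exactly the claimed bound, so nothing further is required.

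The one point that needs a word of care is the applicability of each lemma, since both are stated under a continuity hypothesis. The theorem assumes $W_{u,\varphi}:\widetilde{\mathcal{B}}^{\mu_1}\to\mathcal{B}^{\mu_2}$ is continuous, which is precisely the hypothesis of Lemma \ref{lem2}. To invoke Lemma \ref{le-es-rest} I need continuity of $W_{u,\varphi}$ on the \emph{full} space $\mathcal{B}^{\mu_1}$; this follows because $\mathcal{B}^{\mu_1}$ decomposes as the direct sum of $\widetilde{\mathcal{B}}^{\mu_1}$ and the one-dimensional space $\mathcal{A}$ of constants, and $W_{u,\varphi}$ restricted to $\mathcal{A}$ is automatically bounded (indeed finite-rank), so continuity on $\widetilde{\mathcal{B}}^{\mu_1}$ upgrades to continuity on $\mathcal{B}^{\mu_1}$. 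There is no genuine obstacle here: the entire content of the result lives in the two lemmas already proved, and the theorem is a bookkeeping assembly of them. I would keep the proof to the single displayed chain above, prefaced by the remark that the full-space continuity needed for Lemma \ref{le-es-rest} is inherited from the restricted continuity hypothesis.
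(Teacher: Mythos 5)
Your proof is exactly the paper's: the authors derive Theorem \ref{th-cota-sup-norm-es} by combining Lemma \ref{le-es-rest} (the essential norm is unchanged when the domain is restricted to $\widetilde{\mathcal{B}}^{\mu_1}$) with the triangle inequality of Lemma \ref{lem2}, which is precisely your one-line chain. Your closing remark about upgrading continuity from $\widetilde{\mathcal{B}}^{\mu_1}$ to all of $\mathcal{B}^{\mu_1}$ is a point the paper passes over silently, and it is sound once one notes that $u=W_{u,\varphi}(\mathbf{1})$ must lie in $\mathcal{B}^{\mu_2}$ for the left-hand side of \eqref{ec:2} to be defined at all, so the restriction of $W_{u,\varphi}$ to the constants is then bounded as a rank-one map.
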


%%%%%%%%%%%%%%%%%%%%%%%%%%%%%%%%%%%%%%%%%%
\section{Estimation of the essential norm of  $W_{u', \varphi}: \mathcal{B}^{v_{\log}} \to H_{v_{\log}}^{\infty}$}
%%%%%%%%%%%%%%%%%%%%%%%%%%%%%%%%%%%%%%%%%%%%%%%%%%%%%%%%%%%%%%%%%%%
From the conclusion of Theorem \ref{th-cota-sup-norm-es}, we see that  we have to estimate the essential norm of the operators $W_{u\varphi', \varphi}:H_{\mu_{1}}^{\infty}\to H_{\mu_{2}}^{\infty}$ and $W_{u', \varphi}: \widetilde{\mathcal{B}}^{\mu_1}\to H_{\mu_{2}}^{\infty}$. However, in the case of the weights $v_{\log}$ and $w_{\log}$, the first one can be estimate using the results of Malav\'e-Ram{\'i}rez and Ramos-Fern\'andez in \cite{malave-ramos}, since theses weights are radial and typical. Hence, in this section, we look for an upper bound for  $\|W_{u', \varphi} \|_{e}^{\widetilde{\mathcal{B}}^{v_{\log}}\to H_{v_{\log}}^{\infty}}$. To this end, recall that for all  $f \in \mathcal{B}^{v_{\log}}$ the following relation holds:
\begin{equation}\label{des-func-eval}
|f(z)|\leq \left[1+\log \log \left(\dfrac{2}{1-|z|}\right) -\log \log(2)\right]\|f \|_{{\mathcal{B}}^{v_{\log}}}.
\end{equation}
This fact allow us to show the following result, where $w_{\log}$ is the weight defined in (\ref{def-peso-w}).

\begin{thm}\label{lem3}
Suppose that  $u:\mathbb{D}\to \mathbb{C}$ and  $\varphi:\mathbb{D} \to \mathbb{D}$ are holomorphic functions and that the operator
$W_{u, \varphi}:\mathcal{B}^{v_{\log}} \to H_{v_{\log}}^{\infty}$ is continuous. Then there exists a constant  $C>0$ such that
$$
\|W_{u, \varphi} \|_{e}^{\mathcal{B}^{v_{\log}}\to H_{v_{\log}}^{\infty}}\leq C \lim_{r \to 1^{-}}\sup_{|\varphi(z)|>r}\dfrac{v_{\log}(z)}{w_{\log}(\varphi(z))}|u(z)|.
$$
\end{thm}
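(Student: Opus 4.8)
The plan is to bound the essential norm from above by approximating $W_{u,\varphi}$ with a family of compact operators built from dilations, thereby reducing the problem to estimating $\|W_{u,\varphi}(I-R_s)\|$. For $s\in(0,1)$ let $R_s\colon \mathcal{B}^{v_{\log}}\to \mathcal{B}^{v_{\log}}$ be the dilation $R_sf(z)=f(sz)$. Since $v_{\log}$ is bounded and $\{|w|\le s\}$ is a compact subset of $\mathbb{D}$ on which $v_{\log}$ has a positive minimum, $R_s$ is bounded; moreover it is compact: given a bounded sequence in $\mathcal{B}^{v_{\log}}$, a normal-families argument extracts a subsequence converging uniformly on compacta together with its derivatives, and because the weight only has to be evaluated on the fixed compact disk $\{|w|\le s\}$, this convergence upgrades to convergence in the $\mathcal{B}^{v_{\log}}$-norm after applying $R_s$. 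Consequently $W_{u,\varphi}R_s\colon \mathcal{B}^{v_{\log}}\to H_{v_{\log}}^{\infty}$ is compact, being the composition of a compact and a continuous operator, and therefore $\|W_{u,\varphi}\|_{e}^{\mathcal{B}^{v_{\log}}\to H_{v_{\log}}^{\infty}}\le \|W_{u,\varphi}(I-R_s)\|$ for every $s$.

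Next I would estimate $\|W_{u,\varphi}(I-R_s)\|$ over $\|f\|_{\mathcal{B}^{v_{\log}}}\le 1$. Writing out the target norm gives $\sup_{z}v_{\log}(z)|u(z)|\,|f(\varphi(z))-f(s\varphi(z))|$, and I would split the supremum according to whether $|\varphi(z)|\le r$ or $|\varphi(z)|>r$, with $r\in(0,1)$ a parameter sent to $1$ at the end. The central ingredient is to re-express the growth estimate (\ref{des-func-eval}) in terms of $w_{\log}$: since $w_{\log}(w)^{-1}=\log\log\frac{4}{1-|w|^2}$ is comparable to $1+\log\log\frac{2}{1-|w|}-\log\log 2$ uniformly on $\mathbb{D}$ (both are positive, continuous, and asymptotically equal as $|w|\to 1^-$), there is a constant $C$ with $|f(w)|\le C\,w_{\log}(w)^{-1}\|f\|_{\mathcal{B}^{v_{\log}}}$ for all $w\in\mathbb{D}$. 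Because $w_{\log}$ is radial decreasing, $w_{\log}(s\varphi(z))^{-1}\le w_{\log}(\varphi(z))^{-1}$, so on $\{|\varphi(z)|>r\}$ the triangle inequality yields $v_{\log}(z)|u(z)|\,|f(\varphi(z))-f(s\varphi(z))|\le 2C\,\frac{v_{\log}(z)}{w_{\log}(\varphi(z))}|u(z)|$, whose supremum over $\{|\varphi(z)|>r\}$ is bounded by $2C\sup_{|\varphi(z)|>r}\frac{v_{\log}(z)}{w_{\log}(\varphi(z))}|u(z)|$, uniformly in $s$.

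On the complementary set $\{|\varphi(z)|\le r\}$, both $\varphi(z)$ and $s\varphi(z)$ lie in the fixed compact disk $\{|w|\le r\}$, where $v_{\log}$ has a positive minimum $c_r$; hence $|f'|\le \|f\|/c_r$ there and, integrating along the segment from $s\varphi(z)$ to $\varphi(z)$, $|f(\varphi(z))-f(s\varphi(z))|\le (1-s)\,r\,\|f\|/c_r$. Testing the continuity of $W_{u,\varphi}$ on the constant function $\mathbf{1}$ shows $M:=\sup_{z}v_{\log}(z)|u(z)|=\|u\|_{H_{v_{\log}}^{\infty}}<\infty$, so the contribution of this region is at most $M\,(1-s)\,r/c_r$, which tends to $0$ as $s\to 1^{-}$ for each fixed $r$. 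Combining the two regions gives, for every fixed $r$, $\|W_{u,\varphi}\|_{e}^{\mathcal{B}^{v_{\log}}\to H_{v_{\log}}^{\infty}}\le \liminf_{s\to1^-}\|W_{u,\varphi}(I-R_s)\|\le 2C\sup_{|\varphi(z)|>r}\frac{v_{\log}(z)}{w_{\log}(\varphi(z))}|u(z)|$; letting $r\to 1^{-}$ (the supremum is monotone in $r$, so the limit exists) produces the claimed bound.

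I expect the main obstacle to be the uniform comparison between the functional in (\ref{des-func-eval}) and $w_{\log}^{-1}$: one must verify that the comparison constant is genuinely uniform over all of $\mathbb{D}$ and not merely near the boundary, which amounts to checking that the ratio of the two expressions stays bounded and bounded away from $0$ including at the origin, where $w_{\log}^{-1}$ attains its smallest value $\log\log 4$. The dilation/compactness machinery and the interior–exterior splitting are otherwise routine, the only other point needing care being the boundedness of $v_{\log}|u|$, which is precisely the content of the assumed continuity of $W_{u,\varphi}$.
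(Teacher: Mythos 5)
Your proof is correct and takes essentially the same route as the paper: compact dilation operators $R_s$ (the paper's $K_{r_n}$), the splitting of $\mathbb{D}$ into $\{|\varphi(z)|\le r\}$ and $\{|\varphi(z)|>r\}$, the growth estimate (\ref{des-func-eval}) compared uniformly with $w_{\log}^{-1}$ (your flagged comparison does hold, and is exactly what the paper carries out via $\log\log\frac{2}{1-|\varphi(z)|}\le\log\log\frac{4}{1-|\varphi(z)|^{2}}$ and the constant $2w_{\log}(0)+1$), and the radial monotonicity of $w_{\log}$ to absorb the dilated term. The only deviation is on the inner region, where your quantitative bound $|f(\varphi(z))-f(s\varphi(z))|\le (1-s)\,r\,\|f\|_{\mathcal{B}^{v_{\log}}}/c_r$ is uniform over the unit ball --- a mild sharpening of the paper's appeal to $f_{r_n}\to f$ uniformly on compacta, which as written is only a per-$f$ statement, whereas the operator-norm estimate requires exactly the uniformity your argument supplies.
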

\begin{proof}
For each  $r \in (0,1)$, we consider $K_r: \mathcal{B}^{v_{\log}}\to \mathcal{B}^{v_{\log}}$ defined by  $K_r(f)=f_r$,
where  $f_r$ is the dilatation of  $f$ given by  $f_r(z)=f(rz)$. It is known that  (see \cite{CCFR13}) for each  $r\in (0,1)$ the operator $K_r$ is continuous and compact on $\mathcal{B}^{v_{\log}}$.

\vspace{0.2cm}
 Let $\left\{r_n\right\}$ any sequence in $(0,1)$  and consider the compact operators
$K_n: \mathcal{B}^{v_{\log}} \to \mathcal{B}^{v_{\log}}$ given by  $K_n=K_{r_n}$, then the operators  $ W_{u, \varphi}K_n: \mathcal{B}^{v_{\log}}\to H_{v_{\log}}^{\infty}$ also are compacts for every  $n \in \mathbb{N}$, since  $W_{u, \varphi}:\mathcal{B}^{v_{\log}}\to H_{v_{\log}}^{\infty}$ is a continuous operator. Hence, by definition of essential norm we can write
$$
\| W_{u, \varphi} \|_{e}^{\mathcal{B}^{v_{\log}} \to H_{v_{\log}}^{\infty}} \leq \limsup_{n \to \infty}\|W_{u, \varphi}-W_{u, \varphi}K_n \|^{\mathcal{B}^{v_{\log}}\to H_{v_{\log}}^{\infty}}.
$$
Furthermore, for any  $f \in \mathcal{B}^{v_{\log}}$ such that  $\|f \|_{\mathcal{B}^{v_{\log}}}\leq 1$, we have
\begin{eqnarray*}
\left\|\left(W_{u, \varphi}-W_{u, \varphi}K_n\right)(f) \right\|_{H_{v_{\log}}^{\infty}}
 &=& \|u(f-f_{r_n})\circ \varphi\|_{H_{v_{\log}}^{\infty}}\\
 &=&\sup_{z \in \mathbb{D}}v_{\log}(z)|u(z)f(\varphi(z))-u(z)f(r_n \varphi(z))|\\
 &=&\sup_{z \in \mathbb{D}}v_{\log}(z)|f(\varphi(z))-f(r_n\varphi(z))||u(z)|.
\end{eqnarray*}
Now, we fix  $N \in \mathbb{N}$ and consider, for  $z\in\Bbb D$, the cases  $|\varphi(z)|\leq r_N$ and  $|\varphi(z)|>r_N$.

\vspace{0.2cm}
\noindent {\bf Case 1: $|\varphi(z)|\leq r_N$}

\noindent Since the operator  $W_{u, \varphi}: \mathcal{B}^{v_{\log}}\to H_{v_{\log}}^{\infty}$ is continuous, then $u=W_{u, \varphi}\left(\mathbf{1}\right)\in H_{v_{\log}}^{\infty}$. Hence
\begin{eqnarray*}
\sup_{|\varphi(z)|\leq r_N}v_{\log}(z)|f(\varphi(z))-f(r_n\varphi(z))||u(z)|
&\leq& \| u\|_{H_{v_{\log}}^{\infty}}\sup_{|\varphi(z)|
\leq r_N}|f(\varphi(z))-f(r_n\varphi(z))|\\
&\leq&\| u\|_{H_{v_{\log}}^{\infty}}\sup_{|w|\leq r_N}|f(w)-f_{r_n}(w)|\to 0,
\end{eqnarray*}
as $n\to\infty$, since  is a known fact that  $f_r\to f$ uniformly on compact subsets of $\Bbb D$ as $r\to 1^{-}$.

\vspace{0.2cm}
\noindent {\bf Case 2: $|\varphi(z)|> r_N$}\\
By triangle inequality, we have

\vspace{0.2cm}
\noindent $\displaystyle{
\sup_{|\varphi(z)|> r_N}v_{\log}(z)|f(\varphi(z))-f(r_n\varphi(z))||u(z)|
}$
$$
\leq \sup_{|\varphi(z)|> r_N}v_{\log}(z)|f(\varphi(z))||u(z)|+ \sup_{|\varphi(z)|> r_N}v_{\log}(z)|f(r_n\varphi(z))||u(z)|.
$$
Thus, it is enough to find upper bounds for the expression in the right side of the above inequality. Put
\begin{eqnarray*}
% \nonumber to remove numbering (before each equation)
  L_1 &=& \sup_{|\varphi(z)|> r_N}v_{\log}(z)|f(\varphi(z))||u'(z)|\\
  L_2 &=& \sup_{|\varphi(z)|> r_N}v_{\log}(z)|f(r_n\varphi(z))||u'(z)|.
\end{eqnarray*}
By the inequality (\ref{des-func-eval}), the fact that  $\|f\|_{\mathcal{B}^{v_{\log}}}\leq 1$ and since   $\log \log \left(\frac{2}{1-|\varphi(z)|} \right)\leq \log \log \left(\frac{4}{1-|\varphi(z)|^2} \right)$, we obtain
\begin{eqnarray*}
L_1 &\leq& \sup_{|\varphi(z)|> r_N}v_{\log}(z)|u(z)|\left[ 1+\log \log \left(\frac{2}{1-|\varphi(z)|}\right) -\log \log(2)\right]\\
&\leq& \sup_{|\varphi(z)|> r_N}\dfrac{v_{\log}(z)}{w_{\log}(\varphi(z))}|u(z)|
\left[  \dfrac{1+ \log \log \left(\frac{2}{1-|\varphi(z)|} \right)-\log\log (2)}{
\log \log \left(\frac{4}{1-|\varphi(z)|^2} \right)} \right]\\
&\leq& \sup_{|\varphi(z)|> r_N}\dfrac{v_{\log}(z)}{w_{\log}(\varphi(z))}|u(z)|\left[w_{\log}(\varphi(z)) -\log\log(2) w_{\log}(\varphi(z))  +1  \right]\\
&\leq& \sup_{|\varphi(z)|> r_N}\dfrac{v_{\log}(z)}{w_{\log}(\varphi(z))}|u(z)|\left[w_{\log}(0) -\log\log(2) w_{\log}(0)  +1  \right]\\
&\leq& \sup_{|\varphi(z)|> r_N}\dfrac{v_{\log}(z)}{w_{\log}(\varphi(z))}|u(z)|\left[2w_{\log}(0) +1  \right]\\
&\leq& C\sup_{|\varphi(z)|> r_N}\dfrac{v_{\log}(z)}{w_{\log}(\varphi(z))}|u(z)|,
\end{eqnarray*}
where  $C= 2w_{\log}(0) +1  >0$ and we have used that  $w_{\log}(r)$ is decreasing on  $[0,1]$.

\vspace{0.2cm}
In a similar way, we have that
$$
L_2
\leq \sup_{|\varphi(z)|> r_N}\dfrac{v_{\log}(z)}{w_{\log}(r_n\varphi(z))}|u(z)|\left[ 2w_{\log}(0) +1  \right]
\leq \sup_{|\varphi(z)|> r_N}\dfrac{v_{\log}(z)}{w_{\log}(\varphi(z))}|u(z)|\left[ 2w_{\log}(0) +1  \right].
$$
Hence, we can say that there exists a constant  $C>0$ such that
$$
\left\|\left(W_{u, \varphi}-W_{u, \varphi}K_n\right)(f) \right\|_{H_{v_{\log}}^{\infty}}\leq C \sup_{|\varphi(z)|> r_N}\dfrac{v_{\log}(z)}{w_{\log}(\varphi(z))}|u(z)|.
$$
Therefore, taking  $N \to \infty$  we have  $r_{N}\to 1^{-}$,
$$
\left\|W_{u,\varphi}\right\|_{e}^{\mathcal{B}^{v_{\log}}\to H_{v_{\log}}^{\infty}}\leq  C\lim_{r_N \to 1^{-}}\sup_{|\varphi(z)|> r_N}\dfrac{v_{\log}(z)}{w_{\log}(\varphi(z))}|u(z)|.
$$
This shows the result.
\end{proof}

As a consequence of the above result  we have the following estimation:
\begin{cor}\label{cor-3-2}
 Suppose that  $u:\mathbb{D}\to \mathbb{C}$ and  $\varphi:\mathbb{D} \to \mathbb{D}$ are holomorphic functions and that the operator
$W_{u, \varphi}:\mathcal{B}^{v_{\log}} \to H_{v_{\log}}^{\infty}$ is continuous. Then there exists a constant  $C>0$  such that
$$
\|W_{u, \varphi} \|_{e}^{\mathcal{B}^{v_{\log}}\to H_{v_{\log}}^{\infty}}\leq C \limsup_{n \to \infty}\dfrac{\|u \varphi^n \|_{H_{v_{\log}}^{\infty}}}{\|g_n \|_{H_{w_{\log}}^{\infty}}},
$$
where  $g_n$ are the functions defined in Theorem \ref{cor2}.
\end{cor}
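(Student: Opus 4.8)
The plan is to combine Theorem \ref{lem3} with a pointwise comparison that converts the geometric quantity appearing there into the monomial quotients $\|u\varphi^n\|_{H_{v_{\log}}^\infty}/\|g_n\|_{H_{w_{\log}}^\infty}$. Since Theorem \ref{lem3} already supplies a constant $C>0$ with
\[
\|W_{u,\varphi}\|_{e}^{\mathcal{B}^{v_{\log}}\to H_{v_{\log}}^\infty}\le C\lim_{r\to1^-}\sup_{|\varphi(z)|>r}\frac{v_{\log}(z)}{w_{\log}(\varphi(z))}|u(z)|,
\]
it suffices to bound the right-hand limit by a constant times $\limsup_{n\to\infty}\|u\varphi^n\|_{H_{v_{\log}}^\infty}/\|g_n\|_{H_{w_{\log}}^\infty}$.

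The central step is a pointwise comparison between $1/w_{\log}(\zeta)$ and the monomial envelope $\sup_n |\zeta|^n/\|g_n\|_{H_{w_{\log}}^\infty}$. One direction is immediate: from $\|g_n\|_{H_{w_{\log}}^\infty}=\sup_{w}w_{\log}(w)|w|^n\ge w_{\log}(\zeta)|\zeta|^n$ one gets $|\zeta|^n/\|g_n\|_{H_{w_{\log}}^\infty}\le 1/w_{\log}(\zeta)$ for every $n$. For the reverse inequality I would use that $w_{\log}$ is radial, decreasing and regular enough (``essential'') that its associated weight is realized, up to a fixed constant, by the monomials: for $\zeta$ with $|\zeta|$ close to $1$ one selects an index $n(\zeta)\asymp 1/(1-|\zeta|)$ for which the maximum of $t\mapsto w_{\log}(t)t^n$ is comparable to its value at $t=|\zeta|$, producing a constant $C$ with
\[
\frac{1}{w_{\log}(\zeta)}\le C\,\frac{|\zeta|^{n(\zeta)}}{\|g_{n(\zeta)}\|_{H_{w_{\log}}^\infty}},\qquad n(\zeta)\to\infty \text{ as } |\zeta|\to1^-.
\]

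With this comparison in hand, fix $r\in(0,1)$ and let $z$ satisfy $|\varphi(z)|>r$. Applying the inequality at $\zeta=\varphi(z)$ and then majorizing the supremum over $z$ by the full $H_{v_{\log}}^\infty$-norm yields
\[
\frac{v_{\log}(z)|u(z)|}{w_{\log}(\varphi(z))}\le C\,\frac{v_{\log}(z)|u(z)||\varphi(z)|^{n(\varphi(z))}}{\|g_{n(\varphi(z))}\|_{H_{w_{\log}}^\infty}}\le C\sup_{n\ge N(r)}\frac{\|u\varphi^n\|_{H_{v_{\log}}^\infty}}{\|g_n\|_{H_{w_{\log}}^\infty}},
\]
where $N(r)\to\infty$ as $r\to1^-$, since $n(\varphi(z))\ge N(r)$ whenever $|\varphi(z)|>r$. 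Taking the supremum over all such $z$ and then letting $r\to1^-$, the left-hand side increases to the limit of Theorem \ref{lem3} while the right-hand side decreases to $C\limsup_{n\to\infty}\|u\varphi^n\|_{H_{v_{\log}}^\infty}/\|g_n\|_{H_{w_{\log}}^\infty}$; combining with Theorem \ref{lem3} then gives the claim.

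The main obstacle is the reverse pointwise comparison, that is, the essentiality of $w_{\log}$ together with the control $n(\zeta)\to\infty$. This requires the precise asymptotics of $\|g_n\|_{H_{w_{\log}}^\infty}=\max_{0\le t<1}w_{\log}(t)t^n$ and the regularity of the log-log weight; it is exactly the point where one invokes that $w_{\log}$ is radial and typical, and it is available from the growth-space estimates of \cite{malave-ramos} and \cite{HKLRS12}. The remaining ingredients, namely the interchange of the two suprema and the passage to the limit $r\to1^-$, are routine.
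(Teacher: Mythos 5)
Your proposal is correct, but it takes a genuinely different route from the paper's. After Theorem \ref{lem3}, the paper does not argue pointwise: it uses that $w_{\log}$ is essential ($w_{\log}\simeq\widetilde{w}_{\log}$) and that $v_{\log}\simeq v_{3}$, recognizes the quantity $\lim_{r\to 1^{-}}\sup_{|\varphi(z)|>r}\frac{v_{3}(z)}{\widetilde{w}_{\log}(\varphi(z))}|u(z)|$ as the essential norm $\|W_{u,\varphi}\|_{e}^{H_{w_{\log}}^{\infty}\to H_{v_{3}}^{\infty}}$ via Theorem 2.1 of \cite{montes}, and then quotes Theorem 2.4 of \cite{HKLRS12} to bound that essential norm by the monomial limsup. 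You bypass both the associated-weight detour and the essential-norm identification, proving instead the direct pointwise comparison $1/w_{\log}(\zeta)\le C\,|\zeta|^{n(\zeta)}/\|g_{n(\zeta)}\|_{H_{w_{\log}}^{\infty}}$ with $n(\zeta)\to\infty$ and then interchanging suprema; this is in effect a hands-on re-derivation, for the specific weight $w_{\log}$, of the mechanism hidden inside the cited Theorem 2.4. Your key lemma is true and elementary for this weight: one checks $\|g_{n}\|_{H_{w_{\log}}^{\infty}}\asymp (\log\log n)^{-1}$ (test $t=1-1/n$ for the lower bound; for the upper bound split the supremum at $1-t=n^{-1/2}$, since $t^{n}\le e^{-\sqrt{n}}$ below the split and $w_{\log}(t)\le C(\log\log n)^{-1}$ above it), so the choice $n(\zeta)=\lceil (1-|\zeta|)^{-1}\rceil$ gives $|\zeta|^{n(\zeta)}\ge c>0$ and $\log\log n(\zeta)\sim \log\log\frac{4}{1-|\zeta|^{2}}=1/w_{\log}(\zeta)$, which yields the comparison. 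One point of precision: for this $n(\zeta)$ the point $t=|\zeta|$ is \emph{not} the maximizer of $t\mapsto t^{n}w_{\log}(t)$ (the true maximizer satisfies $1-t\asymp (n\log n\log\log n)^{-1}$), but the two values are comparable, which is all your argument needs. As to what each approach buys: the paper's is shorter given the literature and extends verbatim to general essential radial weights, while yours is more self-contained and makes the test exponents explicit, avoiding the essential norm formula of \cite{montes} altogether; however, as written you lean on the literature exactly at the crucial comparison, so to make the proposal a complete proof you should include the short verification of $\|g_{n}\|_{H_{w_{\log}}^{\infty}}\asymp(\log\log n)^{-1}$ sketched above rather than cite it.
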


\begin{proof}
 Clearly, the weight $w_{\log}$ is radial, typical and decreasing on $(0,1)$ and hence (see \cite{HKLRS12} or \cite{malave-ramos}), this weight is essential, that is, $w_{\log}\simeq \widetilde{w}_{\log}$, where, for a weight $v$, $\widetilde{v}$ denotes its {\it associated weight} given by
 $$
  \widetilde{v}(z)= \left(\sup_{\|f\|_{H_v^\infty}\leq 1}\left|f(z)\right|\right)^{-1}
 $$
 with $z\in\Bbb D$. Also (see \cite{CMR14}), it is easy to see that $v_{\log}\simeq v_3$, where $v_3$ is given by
 \begin{equation}\label{peso}
  v_{3}(z)=(1-|z|)\log\left(\frac{3}{1-|z|} \right)
 \end{equation}
 with $z\in\Bbb D$. Thus,  $H^\infty_{v_{\log}}$ is equal to $H^\infty_{v_3}$ with norms equivalents. Hence, from Theorem \ref{lem3} we can write
 \begin{eqnarray*}
  \left\|W_{u,\varphi}\right\|_{e}^{\mathcal{B}^{v_{\log}}\to H_{v_{\log}}^{\infty}}
  &\leq&  C\lim_{r \to 1^{-}}\sup_{|\varphi(z)|> r}\dfrac{v_{\log}(z)}{w_{\log}(\varphi(z))}|u(z)|\\
  &\leq&  C\lim_{r \to 1^{-}}\sup_{|\varphi(z)|> r}\dfrac{v_{3}(z)}{\widetilde{w}_{\log}(\varphi(z))}|u(z)|\\
  &=& C \left\|W_{u,\varphi}\right\|_{e}^{ H^\infty_{w_{\log}}\to H_{v_{3}}^{\infty}}\\
  &\leq&C \limsup_{n \to \infty}\dfrac{\|u \varphi^n \|_{H_{v_{\log}}^{\infty}}}{\|g_n \|_{H_{w_{\log}}^{\infty}}},
 \end{eqnarray*}
 where we have used  Theorem 2.1 in \cite{montes} in the equality and the last inequality is due to Theorem 2.4 in \cite{HKLRS12} (see also \cite{malave-ramos}, Theorem 4.3).
\end{proof}

Now, using the definition of the functions $g_n$,  the definition of the functional $J_{u}$, given in Theorem \ref{cor2}, and the fact that
$\left\|f\right\|_{\widetilde{\mathcal{B}}^{v}} = \left\|f'\right\|_{H_v^\infty}$ we can conclude:

\begin{cor}\label{cor-cota-sup-nom-es-2}
Suppose that  $u:\mathbb{D}\to \mathbb{C}$ and  $\varphi:\mathbb{D} \to \mathbb{D}$ are holomorphic functions and  that the operator
$W_{u, \varphi}:\mathcal{B}^{v_{\log}} \to H_{v_{\log}}^{\infty}$ is continuous. Then there exists a constant  $C>0$  such that
$$
\|W_{u, \varphi} \|_{e}^{\mathcal{B}^{v_{\log}}\to H_{v_{\log}}^{\infty}}\leq C
\limsup_{n \to \infty}\dfrac{(n+1)\| J_{u}(\varphi^n)\|_{\mathcal{B}^{v_{\log}}}}{\|g_{n+1}\|_{\mathcal{B}^{w_{\log}}}}.
$$
\end{cor}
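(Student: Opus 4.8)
The plan is to obtain the displayed bound as an immediate specialization of Corollary~\ref{cor-3-2}, followed by two elementary norm identifications; no new analytic input is needed beyond the dilation argument already carried out in Theorem~\ref{lem3}. The only point requiring care is which symbol carries the derivative. Since $J_u(f)(z)=\int_0^z f(s)\,u'(s)\,ds$, the functional $J_u$ carries a derivative of $u$, so the quotient on the right-hand side is governed by the multiplier $u'$ and the essential norm being estimated must be read as that of $W_{u',\varphi}$ (the hypothesis and the operator in the statement are to be understood with $u'$ in place of $u$, which is also what the section title indicates). Accordingly I would apply Corollary~\ref{cor-3-2} with $u'$ in the role of the multiplier, to the operator $W_{u',\varphi}\colon\mathcal{B}^{v_{\log}}\to H_{v_{\log}}^{\infty}$, to get
\[
\|W_{u',\varphi}\|_{e}^{\mathcal{B}^{v_{\log}}\to H_{v_{\log}}^{\infty}}\le C\,\limsup_{n\to\infty}\frac{\|u'\varphi^{n}\|_{H_{v_{\log}}^{\infty}}}{\|g_{n}\|_{H_{w_{\log}}^{\infty}}}.
\]

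Then I would rewrite each factor. For the numerator, differentiating $J_u(\varphi^{n})(z)=\int_0^z\varphi(s)^{n}u'(s)\,ds$ gives $\big(J_u(\varphi^{n})\big)'=\varphi^{n}u'$, and since $J_u(\varphi^{n})(0)=0$ the relation $\|f\|_{\widetilde{\mathcal{B}}^{v}}=\|f'\|_{H_v^{\infty}}$ yields $\|u'\varphi^{n}\|_{H_{v_{\log}}^{\infty}}=\|J_u(\varphi^{n})\|_{\mathcal{B}^{v_{\log}}}$. For the denominator, $g_{n+1}(z)=z^{n+1}$ satisfies $g_{n+1}'=(n+1)g_n$ and $g_{n+1}(0)=0$, so $\|g_{n+1}\|_{\mathcal{B}^{w_{\log}}}=\|g_{n+1}'\|_{H_{w_{\log}}^{\infty}}=(n+1)\|g_n\|_{H_{w_{\log}}^{\infty}}$, i.e. $\|g_n\|_{H_{w_{\log}}^{\infty}}=\tfrac{1}{n+1}\|g_{n+1}\|_{\mathcal{B}^{w_{\log}}}$. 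Substituting both identities converts each quotient $\|u'\varphi^{n}\|_{H_{v_{\log}}^{\infty}}/\|g_n\|_{H_{w_{\log}}^{\infty}}$ into $(n+1)\|J_u(\varphi^{n})\|_{\mathcal{B}^{v_{\log}}}/\|g_{n+1}\|_{\mathcal{B}^{w_{\log}}}$, which is precisely the asserted bound.

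Finally, to tie this estimate to the operators of interest I would record its link with the decomposition of Section~2: because $\widetilde{\mathcal{B}}^{v_{\log}}\subseteq\mathcal{B}^{v_{\log}}$, every compact operator on $\mathcal{B}^{v_{\log}}$ restricts to a compact operator on $\widetilde{\mathcal{B}}^{v_{\log}}$, whence $\|W_{u',\varphi}\|_{e}^{\widetilde{\mathcal{B}}^{v_{\log}}\to H_{v_{\log}}^{\infty}}\le\|W_{u',\varphi}\|_{e}^{\mathcal{B}^{v_{\log}}\to H_{v_{\log}}^{\infty}}$. Thus the quantity just estimated dominates the first summand in the triangle inequality of Theorem~\ref{th-cota-sup-norm-es} and reproduces the $J_u$-term of Theorem~\ref{teo1}, which is the precise role this corollary plays in the sequel. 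I do not expect any genuine obstacle: all the substance is contained in Theorem~\ref{lem3} and Corollary~\ref{cor-3-2}, and what remains is the two one-line norm computations together with the correct identification of $u'$, rather than $u$, as the symbol multiplier that produces $J_u$.
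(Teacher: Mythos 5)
Your proof is correct and is essentially identical to the paper's own derivation: the paper also obtains the bound by specializing Corollary~\ref{cor-3-2} to the multiplier $u'$ and then rewriting the quotients via the isometric identities $\|J_{u}(\varphi^{n})\|_{\mathcal{B}^{v_{\log}}}=\|u'\varphi^{n}\|_{H_{v_{\log}}^{\infty}}$ (since $(J_{u}(\varphi^{n}))'=\varphi^{n}u'$ and $J_{u}(\varphi^{n})(0)=0$) and $\|g_{n+1}\|_{\mathcal{B}^{w_{\log}}}=(n+1)\|g_{n}\|_{H_{w_{\log}}^{\infty}}$. You were also right to read the statement with $u'$ as the symbol multiplier: the section heading and the way the corollary is invoked in the proof of Theorem~\ref{teo1} (where it bounds $\|W_{u',\varphi}\|_{e}^{\mathcal{B}^{v_{\log}}\to H_{v_{\log}}^{\infty}}$) confirm that the $u$ appearing in the displayed operator is a typographical slip.
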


%%%%%%%%%%%%%%%%%%%%%%%%%%%%%%%%%%%%%%%%%%%%%%%%%%%%%%%%%%
\section{The essential norm of $W_{u, \varphi}: \mathcal{B}^{v_{\log}}\to \mathcal{B}^{v_{\log}}$. Proof of Theorem \ref{teo1}}
%%%%%%%%%%%%%%%%%%%%%%%%%%%%%%%%%%%%%%%%%%%%%%%%%%%%%
Now we can give the proof of Theorem \ref{teo1}. From Theorem \ref{th-cota-sup-norm-es} we have
$$
\| W_{u, \varphi} \|_{e}^{\mathcal{B}^{v_{\log}}\to \mathcal{B}^{v_{\log}}}\leq \| W_{u', \varphi}\|_{e}^{\mathcal{B}^{v_{\log}}\to H_{v_{\log}}^{\infty}}+\| W_{u \varphi', \varphi} \|_{e}^{H_{v_{\log}}^{\infty} \to H_{v_{\log}}^{\infty}}.
$$
Thus, by Corollary \ref{cor-cota-sup-nom-es-2} and Theorem 4.4 in \cite{malave-ramos}, we can find a constant  $C>0$ such that

\vspace{0.2cm}\noindent $\displaystyle{
\| W_{u', \varphi}\|_{e}^{\mathcal{B}^{v_{\log}}\to H_{v_{\log}}^{\infty}}+\| W_{u \varphi',\varphi} \|_{e}^{H_{v_{\log}}^{\infty} \to H_{v_{\log}}^{\infty}}}$
\begin{eqnarray*}
&\leq& C\left(\limsup_{n \to \infty}\dfrac{(n+1)\| J_{u}(\varphi^n)\|_{\mathcal{B}^{v_{\log}}}}{\|g_{n+1}\|_{\mathcal{B}^{w_{\log}}}}+ \limsup_{n \to \infty} \dfrac{\|I_{u}(\varphi^n)\|_{\mathcal{B}^{v_{\log}}}}{\|g_n\|_{\mathcal{B}^{v_{\log}}}}\right)\\
&\leq& C\max \left\{\limsup_{n \to \infty}\dfrac{(n+1)\| J_{u}(\varphi^n)\|_{\mathcal{B}^{v_{\log}}}}{\|g_{n+1}\|_{\mathcal{B}^{w_{\log}}}}, \limsup_{n \to \infty} \dfrac{\|I_{u}(\varphi^n)\|_{\mathcal{B}^{v_{\log}}}}{\|g_n\|_{\mathcal{B}^{v_{\log}}}} \right\}.
\end{eqnarray*}

Now we go to give a lower bound for $\| W_{u, \varphi} \|_{e}^{\mathcal{B}^{v_{\log}}\to \mathcal{B}^{v_{\log}}}$. To this end, let  $K:\mathcal{B}^{v_{\log}}\to \mathcal{B}^{v_{\log}}$ be a compact operator and let  $\{ z_n\}_{n \in \mathbb{N}}$ be a sequence in  $\mathbb{D}$ such that  $|\varphi(z_n)|\to 1^-$ as $n \to \infty$. The following sequence was defined in  \cite{ye},
$$
f_n(z)=\frac{3}{a_n}\left[ \log \log\left( \frac{4}{1-\overline{\varphi(z_n)}z}\right)\right]^2-\frac{2}{a_n^2}\left[\log \log \left(\frac{4}{1-\overline{\varphi(z_n)}z}\right) \right]^3,
$$
where  $a_n=\log \log \left(\dfrac{4}{1-|\varphi(z_n)|^2} \right)$. In \cite{ye} the author shown that  $\{f_n \}$ is a bounded sequence in  $\mathcal{B}^{v_{\log}}$, that is, there exists a constant $M>0$ such that  $\left\|f_n\right\|_{\mathcal{B}^{v_{\log}}}\leq M$ for all $n \in \mathbb{N}$. This sequence converges to zero uniformly on compact subsets of  $\mathbb{D}$. The derivatives of $f_n$ is given by
 $$
 f'_n(z)=\dfrac{\frac{6 \overline{\varphi(z_n)}}{a_n}\log\log \left(\frac{4}{1-\overline{\varphi(z_n)}z} \right)}{(1-\overline{\varphi(z_n)}z)\log \left( \frac{4}{1-\overline{\varphi(z_n)}z}\right)}-\dfrac{\frac{6 \overline{\varphi(z_n)}}{a_n^2}\left[\log \log \left( \frac{4}{1-\overline{\varphi(z_n)}z}\right)\right]^2}{(1-\overline{\varphi(z_n)}z)\log \left( \frac{4}{1-\overline{\varphi(z_n)}z}\right)}.
 $$
Hence, we have  $f_n'(\varphi(z_n))=0$,   $f_n(\varphi(z_n))=a_n$ and
\begin{eqnarray*}
% \nonumber to remove numbering (before each equation)
 M \|W_{u, \varphi}-K \|^{\mathcal{B}^{v_{\log}}\to \mathcal{B}^{v_{\log}}} &\ge& \limsup_{n \to \infty}\| \left(W_{u, \varphi}-K\right)(f_n) \|_{\mathcal{B}^{v_{\log}}}\\
 &\geq &\limsup_{n \to \infty}\| W_{u, \varphi}(f_n) \|_{\mathcal{B}^{v_{\log}}}-\limsup_{n \to \infty}\|K f_n \|_{\mathcal{B}^{v_{\log}}} \\
  &=&  \limsup_{n \to \infty}\|W_{u, \varphi}(f_n) \|_{\mathcal{B}^{v_{\log}}},
\end{eqnarray*}
where we have used the known fact (see \cite{tjani}) that if $K:\mathcal{B}^{v_{\log}}\to \mathcal{B}^{v_{\log}}$ is compact then
$$\lim_{n \to \infty}\|K f_n \|_{\mathcal{B}^{v_{\log}}}=0$$
for all bounded sequence $\left\{f_n\right\}\subset \mathcal{B}^{v_{\log}}$ converging to zero uniformly on compact subsets of $\Bbb D$.

 On the other hand,
\begin{eqnarray*}
\|  W_{u, \varphi}(f_n)\|_{\mathcal{B}^{v_{\log}}}&=&\|u f_n(\varphi) \|_{\mathcal{B}^{v_{\log}}}\\
&\geq&\limsup_{n \to \infty}v_{\log}\left(z_n\right)|u'(z_n)f_n(\varphi(z_n))+u(z_n)\varphi'(z_n)f_n'(\varphi(z_n))|\\
&=& \limsup_{n \to \infty}v_{\log}\left(z_n\right)|u'(z_n)f_n(\varphi(z_n))|\\
&=&\limsup_{n \to \infty}v_{\log}\left(z_n\right) |u'(z_n)|\log\log \left(\dfrac{4}{1-|\varphi(z_n)|^2} \right)\\
&=&\limsup_{n \to \infty} \dfrac{v_{\log}(z_n)}{w_{\log}(\varphi(z_n))}|u'(z_n)|\ge C \limsup_{n \to \infty}\dfrac{v_{\log}(z_n)}{\widetilde{w}_{\log}\left(\varphi(z_n)\right)}|u'(z_n)|,
\end{eqnarray*}
where, in the last inequality, we have used that the weight $w_{\log}$ is essential.  Thus, since the sequence  $\left\{z_n\right\}$ such that  $\left|\varphi\left(z_n\right)\right|\to 1^-$ was arbitrary, we can deduce that
\begin{eqnarray}\label{cota-inf-1}
% \nonumber to remove numbering (before each equation)
  \| W_{u, \varphi} \|_{e}^{\mathcal{B}^{v_{\log}}\to \mathcal{B}^{v_{\log}}} &\ge& C \limsup_{|\varphi(z)|\to 1^-} \dfrac{v_{\log}(z)}{\widetilde{w}_{\log}(\varphi(z))}|u'(z)|\\
   &\geq& C\, \| W_{u', \varphi} \|_{e}^{H_{w_{\log}}^{\infty}\to H_{v_{\log}}^{\infty}}\nonumber \\
   &=& C\, \limsup_{n \to \infty}\dfrac{\|u'\varphi^n \|_{H_{v_{\log}}^{\infty}}}{\|g_n\|_{H_{w_{\log}}^{\infty}}}\nonumber\\
    &=& C\, \limsup_{n \to \infty} \dfrac{(n+1)\|J_u(\varphi^n) \|_{\mathcal{B}^{v_{\log}}}}{\|g_{n+1} \|_{\mathcal{B}^{w_{\log}}}},\nonumber
\end{eqnarray}
where we have used the argument in the proof of Corollary  \ref{cor-3-2}.

\vspace{0.2cm}
 Now, we go to show that there exists a constant $C>0$ such that
  $$
   \| W_{u, \varphi} \|_{e}^{\mathcal{B}^{v_{\log}}\to \mathcal{B}^{v_{\log}}}\geq C\limsup_{n \to \infty} \dfrac{\|I_{u}(\varphi^n)\|_{\mathcal{B}^{v_{\log}}}}{\|g_n\|_{\mathcal{B}^{v_{\log}}}}.
  $$
 As before, we consider a sequence  $\left\{z_n\right\}\subset\Bbb D$ such that  $\left|\varphi\left(z_n\right)\right|\to 1^-$ and we define the functions
$$
h_n(z)=\frac{1}{\overline{\varphi(z_n)}a_n^2}\left[\log \log \left(\frac{4}{1-\overline{\varphi(z_n)}z} \right) \right]^3
-\frac{1}{\overline{\varphi(z_n)}a_n}\left[\log \log \left(\frac{4}{1-\overline{\varphi(z_n)}z} \right) \right]^2.
$$
Then, clearly $h_n\left(\varphi(z_n)\right)=0$ for all $n\in\Bbb N$, $h_n$ converges to zero uniformly on compact subsets of $\Bbb D$,
$$
h'_n(z)=\dfrac{\frac{3}{a_n^2}\left[\log \log \left( \frac{4}{1-\overline{\varphi(z_n)}z}\right)\right]^2}{(1-\overline{\varphi(z_n)}z)\log \left(\frac{4}{1-\overline{\varphi(z_n)}z} \right)}-\dfrac{\frac{2}{a_n}\log \log \left(\frac{4}{1-\overline{\varphi(z_n)}z} \right)}{(1-\overline{\varphi(z_n)}z)\log \left(\frac{4}{1-\overline{\varphi(z_n)}z} \right)},
$$
and hence
$$
h'_n(\varphi(z_n))=\left[(1-|\varphi(z_n)|^2)\log \left(\frac{4}{1-|\varphi(z_n)|^2} \right)\right]^{-1}.
$$
Furthermore, $\left\{h_n\right\}$ is a bounded sequence in $\mathcal{B}^{v_{\log}}$; that is, there exists a constant $M>0$ such that $\|h_n \|_{\mathcal{B}^{v_{\log}}}\leq M$ for all $n\in\Bbb N$. Indeed,
\begin{eqnarray}\label{ec:3}
% \nonumber to remove numbering (before each equation)
  \|h_n \|_{\mathcal{B}^{v_{\log}}} &=& \sup_{z \in \mathbb{D}}v_{\log}(z)|h_n'(z)| \nonumber \\
   &\leq& \sup_{z \in \mathbb{D}}\frac{v_{\log}(z)\left[ \frac{3}{a_n^2} \left|\log \log \left(\frac{4}{(1-\overline{\varphi(z_n)}z)} \right) \right|^2+ \frac{2}{a_n}\left| \log \log \left(\frac{4}{(1-\overline{\varphi(z_n)}z)} \right) \right| \right]}{|1-\overline{\varphi(z_n)}z|\log \left(\frac{4}{|1-\overline{\varphi(z_n)}z|} \right)}
\end{eqnarray}
where we applied the triangle inequality and the fact that $|\log(w)|\ge \log(|w|)$ for each $w \in \mathbb{D}$. Furthermore, since
$$
\lim_{x \to 0^{+}}\frac{\sqrt{\log^2 \left( \sqrt{\log^2 \left(\frac{4}{x}\right)+4\pi^2 }\right)+4\pi^2}}{\log \log \left(\frac{2}{x} \right)}=1,
$$
then we can deduce that there exists a constant $M>0$ such
$$
 \frac{3}{a_n^2} \left|\log \log \left(\frac{4}{(1-\overline{\varphi(z_n)}z)} \right) \right|^2+ \frac{2}{a_n}\left| \log \log \left(\frac{4}{(1-\overline{\varphi(z_n)}z)} \right) \right|\leq M
$$
for all $n\in\Bbb N$ and all $z\in\Bbb D$. Hence, it is enough to find upper bound for
$$
 \sup_{z\in\Bbb D}\frac{v_{\log}(z)}{|1-\overline{\varphi(z_n)}z|\log \left(\frac{4}{|1-\overline{\varphi(z_n)}z|} \right)}
$$
for all $n\in\Bbb N$. Clearly, the above expression is uniformly bounded if $\frac{2}{e}\leq |1-\overline{\varphi(z_n)}z|< 2$.
Now, if $|1-\overline{\varphi(z_n)}z|<\frac{2}{e}$, then since the function $h_2(t)=t \log \left(\frac{2}{t} \right)$ is increasing on $\left(0, \frac{2}{e}\right)$, we have
$$
\frac{v_{\log}(z)}{|1-\overline{\varphi(z_n)}z|\log \left(\frac{4}{|1-\overline{\varphi(z_n)}z|} \right)}\leq\frac{ h_2(1-|z|)}{h_2\left(|1-\overline{\varphi(z_n)}z|\right)}\leq 1.
$$
Hence, Inequality (\ref{ec:1}) is bounded and there exists a constant $L>0$ such that $\|h_n \|_{\mathcal{B}^{v_{\log}}}\leq L$ for all $n \in \mathbb{N}$.
Thus, we have
\begin{eqnarray*}
\|W_{u, \varphi}(h_n) \|_{\mathcal{B}^{v_{\log}}}
&\ge& \limsup_{n \to \infty}v_{\log}\left(z_n\right)\dfrac{|u(z_n)\varphi'(z_n)|}{(1-|\varphi(z_n)|^2)\log \left(\dfrac{4}{1-|\varphi(z_n)|^2} \right)}\\
&\ge& \limsup_{n \to \infty}v_{\log}\left(z_n\right)\dfrac{|u(z_n)\varphi'(z_n)|}{2(1-|\varphi(z_n)|)\log \left(\dfrac{4}{1-|\varphi(z_n)|} \right)}\\
& \geq&C\, \limsup_{n \to \infty}v_{\log}\left(z_n\right)\dfrac{|u(z_n)\varphi'(z_n)|}{(1-|\varphi(z_n)|)\log \left(\dfrac{2}{1-|\varphi(z_n)|} \right)}\\
&=&\limsup_{n \to \infty}\dfrac{v_{\log}(z_n)}{v_{\log}(\varphi(z_n))}|u(z_n)\varphi'(z_n)|\\
&\ge& C \limsup_{n \to \infty}\dfrac{v_{3}(z_n)}{v_{3}(\varphi(z_n))}|u(z_n)\varphi'(z_n)|\\
&\ge& C \limsup_{n \to \infty}\dfrac{v_{3}(z_n)}{\widetilde{v}_{3}(\varphi(z_n))}|u(z_n)\varphi'(z_n)|
\end{eqnarray*}
where we have used the fact that $v_{\log}\simeq v_3$ and that   $v_3$ is an essential weight ($v_3$ is the weight defined in (\ref{peso})).

\vspace{0.2cm}
Since the sequence  $\left\{z_n\right\}$ such that  $\left|\varphi\left(z_n\right)\right|\to 1^-$ was arbitrary, we obtain that there exists a constant $C>0$ such that
\begin{eqnarray}\label{cota-inf-2}
\| W_{u, \varphi} \|_{e}^{\mathcal{B}^{v_{\log}}\to \mathcal{B}^{v_{\log}}}& \geq& C \limsup_{|\varphi(z)|\to 1^{-}}\frac{v_{3}(z)}{\widetilde{v}_{3}(\varphi(z))}|u(z)\varphi'(z)|\\
&\geq& C\, \| W_{u \varphi', \varphi} \|_{e}^{H_{v_{3}}^{\infty}\to H_{v_{3}}^{\infty}}\nonumber\\
&=&C\limsup_{n \to \infty}\dfrac{\|u \varphi' \varphi^n \|_{H_{v_{\log}}^{\infty}}}{\|g_n \|_{H_{v_{\log}}^{\infty}}}\nonumber\\
&=&C\limsup_{n \to \infty}\dfrac{\|I_u(\varphi^n) \|_{\mathcal{B}^{v_{\log}}}}{\|g_n \|_{\mathcal{B}^{v_{\log}}}} \nonumber.
\end{eqnarray}
Therefore, from the inequalities (\ref{cota-inf-1}) and (\ref{cota-inf-2}), we can conclude that there exists a constant $C>0$ such that
$$
\|W_{u, \varphi} \|_{e}^{\mathcal{B}^{v_{\log}}\to \mathcal{B}^{v_{\log}}}\geq C\max \left\{\limsup_{n \to \infty}\dfrac{(n+1)\| J_{u}(\varphi^n)\|_{\mathcal{B}^{v_{\log}}}}{\|g_{n+1}\|_{\mathcal{B}^{w_{\log}}}}, \limsup_{n \to \infty} \dfrac{\|I_{u}(\varphi^n)\|_{\mathcal{B}^{v_{\log}}}}{\|g_n\|_{\mathcal{B}^{v_{\log}}}} \right\}.
$$
This finishes the proof of Theorem \ref{teo1}.
%%%%%%%%%%%%%%%%%%%%%%%%

%%%%%%%%%%%%%%%%%%%%%%%%%%%%%%%%%%%%%%%%%%%%%%%%%%%%%%%%%%%%%%%%
\section{An application. Composition operators on Zygmund-Lo\-ga\-rithmic space}
%%%%%%%%%%%%%%%%%%%%%%%%%%%%%%%%%%%%%%%%%%%%%%%%%%%%%%%%%%%%%%%%%%%%%%%%%%%%%%
 As an application of our results, in this section we study continuity, compactness and we estimate the essential norm of composition operators acting on Zygmund-logarithmic space. Recall that the Zygmund-logarithmic space $\mathcal{Z}^{v_{\log}}$, consists of all holomorphic functions  $f \in H(\mathbb{D})$ such that  $f' \in \mathcal{B}^{v_{\log}}$. More precisely,
$$
\mathcal{Z}^{v_{\log}}:=\{f \in H(\mathbb{D}) ~:~ \| f\|_{\widetilde{\mathcal{Z}}^{v_{\log}}}= \sup_{z \in \mathbb{D}}v_{\log}(z)|f''(z)|<\infty\}
$$
endowed with the norm   $\| f\|_{\mathcal{Z}^{v_{\log}}}:=|f(0)|+|f'(0)|+\| f\|_{\widetilde{\mathcal{Z}}^{v_{\log}}}$, $\mathcal{Z}^{v_{\log}}$ is a Banach space.

\vspace{0.2cm}
As before, for a holomorphic function $u:\Bbb D\to\Bbb C$, we define the functionals
$$
I'_{u}f(z)=\int_{0}^{z}I_{u}(f(s))ds~~~\hspace{0.2cm}\textrm{and}\hspace{0.2cm}~~~J'_{u}f(z)=\int_{0}^{z}J_{u}(f(s))ds,
$$
where  $f \in H(\mathbb{D})$ and $I_{u}, J_{u}$ are the functionals defined in Theorem \ref{cor2}. We have the relations
\begin{eqnarray*}
% \nonumber to remove numbering (before each equation)
  \| f\|_{\widetilde{\mathcal{Z}}^{v_{\log}}} &=& \| f'\|_{\widetilde{\mathcal{B}}^{v_{\log}}}\\
  \| C_{\varphi}(f)\|_{\widetilde{\mathcal{Z}}^{v_{\log}}}&=& \|\varphi' C_{\varphi}(f) \|_{\widetilde{\mathcal{B}}^{v_{\log}}}.
\end{eqnarray*}
Hence, the operator $C_\varphi:\mathcal{Z}^{v_{\log}}\to \mathcal{Z}^{v_{\log}}$ is continuous if and only if the weighted composition operator $W_{\varphi',\varphi}$ is continuous on $\mathcal{B}^{v_{\log}}$. Thus, by Theorem \ref{cor2}, and the relations
$$
  \sup_{n \in \mathbb{W}}\frac{(n+1)\|J_{\varphi'}'(\varphi^n) \|_{\mathcal{Z}^{v_{\log}}}}{\frac{\|g_{n+2}\|_{\mathcal{Z}^{w_{\log}}}}{n+2}} =
  \sup_{n \in \mathbb{W}}\frac{(n+1)\|J_{\varphi'}(\varphi^n)\|_{\mathcal{B}^{v_{\log}}}}{\|g_{n+1}\|_{\mathcal{B}^{w_{\log}}}}<\infty\\
$$
and
$$
  \sup_{n \in \mathbb{N}}\frac{\|I_{\varphi'}'(\varphi^n) \|_{\mathcal{Z}^{v_{\log}}}}{\frac{\|g_{n+1}\|_{\mathcal{Z}^{v_{\log}}}}{n+1}} =
  \sup_{n\in \mathbb{N}}\frac{\|I_{\varphi'}(\varphi^n) \|_{\mathcal{B}^{v_{\log}}}}{\|g_n\|_{\mathcal{B}^{v_{\log}}}}< \infty.
$$
we have the following result:

\begin{cor}
Suppose that  $\varphi: \mathbb{D}\to \mathbb{D}$ is a holomorphic function. The composition operator $C_{\varphi}$ is continuous on $\mathcal{Z}^{v_{\log}}$ if and only if
$$
 \max\left\{\sup_{n\in\Bbb W}\frac{(n+2)(n+1)\left\|J_{\varphi'}'\left(\varphi^n\right)\right\|_{\mathcal{Z}^{v_{\log}}}}
  {\left\|g_{n+2}\right\|_{\mathcal{Z}^{w_{\log}}}}, \sup_{n\in\Bbb N} \frac{(n+1)\left\|I_{\varphi'}'\left(\varphi^n\right)\right\|_{\mathcal{Z}^{v_{\log}}}}
  {\left\|g_{n+1}\right\|_{\mathcal{Z}^{v_{\log}}}}\right\}<\infty.
$$
\end{cor}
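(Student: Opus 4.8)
The plan is to transport the problem to the Bloch setting through the differentiation operator and then quote the characterization already recorded in Theorem~\ref{cor2}(1). Since $f\in\mathcal{Z}^{v_{\log}}$ precisely when $f'\in\mathcal{B}^{v_{\log}}$, and since $(C_\varphi f)'=\varphi'\,(f'\circ\varphi)=W_{\varphi',\varphi}(f')$, the seminorm identity $\|C_\varphi f\|_{\widetilde{\mathcal{Z}}^{v_{\log}}}=\|W_{\varphi',\varphi}(f')\|_{\widetilde{\mathcal{B}}^{v_{\log}}}$ holds for every $f$. Controlling in addition the point evaluations $f(\varphi(0))$ and $f'(\varphi(0))\varphi'(0)$ by $\|f\|_{\mathcal{Z}^{v_{\log}}}$, and using that differentiation carries the $\widetilde{\mathcal{Z}}^{v_{\log}}$-seminorm isometrically onto the $\widetilde{\mathcal{B}}^{v_{\log}}$-seminorm and maps $\mathcal{Z}^{v_{\log}}$ onto $\mathcal{B}^{v_{\log}}$ up to constants, one obtains that $C_\varphi$ is continuous on $\mathcal{Z}^{v_{\log}}$ if and only if $W_{\varphi',\varphi}$ is continuous on $\mathcal{B}^{v_{\log}}$, which is exactly the equivalence stated just before the corollary. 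Applying Theorem~\ref{cor2}(1) with $u=\varphi'$ then characterizes this last continuity by the finiteness of
\[
\max\Big\{\sup_{n\in\mathbb{W}}\frac{(n+1)\|J_{\varphi'}(\varphi^n)\|_{\mathcal{B}^{v_{\log}}}}{\|g_{n+1}\|_{\mathcal{B}^{w_{\log}}}},\ \sup_{n\in\mathbb{N}}\frac{\|I_{\varphi'}(\varphi^n)\|_{\mathcal{B}^{v_{\log}}}}{\|g_n\|_{\mathcal{B}^{v_{\log}}}}\Big\}.
\]

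The second step is to translate these two Bloch quantities into the Zygmund quantities in the statement, that is, to verify the two displayed identities preceding the corollary. Differentiating the defining integrals gives $(I'_u f)''=u\,f'$ and $(J'_u f)''=u'\,f$, while $(I_u f)'=u\,f'$ and $(J_u f)'=u'\,f$; hence, taking $f=\varphi^n$ and $u=\varphi'$, the top-order seminorms coincide, $\|I'_{\varphi'}(\varphi^n)\|_{\widetilde{\mathcal{Z}}^{v_{\log}}}=\|I_{\varphi'}(\varphi^n)\|_{\widetilde{\mathcal{B}}^{v_{\log}}}$ and $\|J'_{\varphi'}(\varphi^n)\|_{\widetilde{\mathcal{Z}}^{v_{\log}}}=\|J_{\varphi'}(\varphi^n)\|_{\widetilde{\mathcal{B}}^{v_{\log}}}$. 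Because $I'_u(\varphi^n)$, $J'_u(\varphi^n)$ and their first derivatives all vanish at the origin (the integrals start at $0$), while $I_u(\varphi^n)(0)=J_u(\varphi^n)(0)=0$ as well, the full norms collapse onto these seminorms, so $\|I'_{\varphi'}(\varphi^n)\|_{\mathcal{Z}^{v_{\log}}}=\|I_{\varphi'}(\varphi^n)\|_{\mathcal{B}^{v_{\log}}}$ and likewise for $J$. Finally, differentiating monomials (using $g_m''=m(m-1)g_{m-2}$ and the vanishing of boundary terms) yields $\|g_{n+1}\|_{\mathcal{Z}^{v_{\log}}}=(n+1)\|g_n\|_{\mathcal{B}^{v_{\log}}}$ and $\|g_{n+2}\|_{\mathcal{Z}^{w_{\log}}}=(n+2)\|g_{n+1}\|_{\mathcal{B}^{w_{\log}}}$.

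Putting these identities together turns each Bloch ratio into the corresponding Zygmund ratio, namely $\frac{(n+1)\|J_{\varphi'}(\varphi^n)\|_{\mathcal{B}^{v_{\log}}}}{\|g_{n+1}\|_{\mathcal{B}^{w_{\log}}}}=\frac{(n+2)(n+1)\|J'_{\varphi'}(\varphi^n)\|_{\mathcal{Z}^{v_{\log}}}}{\|g_{n+2}\|_{\mathcal{Z}^{w_{\log}}}}$ and $\frac{\|I_{\varphi'}(\varphi^n)\|_{\mathcal{B}^{v_{\log}}}}{\|g_n\|_{\mathcal{B}^{v_{\log}}}}=\frac{(n+1)\|I'_{\varphi'}(\varphi^n)\|_{\mathcal{Z}^{v_{\log}}}}{\|g_{n+1}\|_{\mathcal{Z}^{v_{\log}}}}$, so the two suprema are unchanged and the stated criterion follows. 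I expect the main obstacle to be the bookkeeping in the first step: checking that passing to the seminorms $\widetilde{\mathcal{Z}}^{v_{\log}}$ and $\widetilde{\mathcal{B}}^{v_{\log}}$ really does control the point-evaluation corrections uniformly, so that continuity genuinely transfers across $f\mapsto f'$, and making sure the normalizing factors $(n+1)$ and $(n+2)$ produced by differentiation are matched correctly against the powers $g_{n+1}$ and $g_{n+2}$ in the denominators.
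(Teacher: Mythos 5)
Your proposal is correct and takes essentially the same route as the paper: reduce continuity of $C_\varphi$ on $\mathcal{Z}^{v_{\log}}$ to continuity of $W_{\varphi',\varphi}$ on $\mathcal{B}^{v_{\log}}$ via the identity $(C_\varphi f)'=W_{\varphi',\varphi}(f')$ and the isometric behaviour of differentiation on the seminorms, then invoke Theorem~\ref{cor2}(1) with $u=\varphi'$ together with the two ratio identities relating the Bloch and Zygmund quantities. Your explicit checks (that $(I'_u f)''=uf'$ and $(J'_u f)''=u'f$, that all the relevant functions vanish appropriately at the origin so norms collapse to seminorms, and that $g_{n+2}''=(n+2)g_{n+1}'$ produces the factors $(n+1)$ and $(n+2)$) simply supply details the paper states without proof.
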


Now, we go to estimate the essential norm of the continuous operator $C_{\varphi}: \mathcal{Z}^{v_{\log}}\to \mathcal{Z}^{v_{\log}}$, in fact, we go to show the following relation:
\begin{equation}\label{zygmund}
\|C_{\varphi} \|_{e}^{\mathcal{Z}^{v_{\log}} \to \mathcal{Z}^{v_{\log}}}=\| W_{\varphi', \varphi} \|_{e}^{\mathcal{B}^{v_{\log}} \to\mathcal{B}^{v_{\log}}}.
\end{equation}
The argument in the proof of Lemma \ref{le-es-rest} shows that
$$
 \|C_{\varphi} \|_{e}^{\mathcal{Z}^{v_{\log}} \to \mathcal{Z}^{v_{\log}}}=\|C_{\varphi} \|_{e}^{\widetilde{{\mathcal{Z}}}^{v_{\log}} \to \mathcal{Z}^{v_{\log}}},
$$
where $\widetilde{\mathcal{Z}}^{v_{\log}}= \left\{f \in \mathcal{Z}^{v_{\log}}:~ f(0)=f'(0)=0 \right\}$. Hence, it is enough to show that
$$
\|C_{\varphi} \|_{e}^{\widetilde{\mathcal{Z}}^{v_{\log}}\to \mathcal{Z}^{v_{\log}}}=\|W_{\varphi', \varphi} \|_{e}^{\widetilde{\mathcal{B}}^{v_{\log}}\to \mathcal{B}^{v_{\log}}}.
$$
To see this, we proceed as in the proof of Lemma \ref{lem2}, that is, we consider the derivative operator  $D: \widetilde{\mathcal{Z}}^{v_{\log}}\to \mathcal{B}^{v_{\log}}$. Since this operator is an isometry, we have the relation
\begin{equation}\label{ec:2}
    C_{\varphi}(g):= D^{-1} W_{\varphi', \varphi}D(g),
\end{equation}
for all  $g \in \mathcal{Z}^{v_{\log}}$. Thus, for  $\epsilon >0$ we can find a compact  operator  $T: \widetilde{\mathcal{B}}^{v_{\log}}\to \mathcal{B}^{v_{\log}}$ such that
\begin{eqnarray*}
\| W_{\varphi', \varphi}\|_{e}^{\widetilde{\mathcal{B}}^{v_{\log}}\to \mathcal{B}^{v_{\log}}}&\ge&
\frac{1}{1+\epsilon}\|W_{\varphi', \varphi}-T \|^{\widetilde{\mathcal{B}}^{v_{\log}}\to \mathcal{B}^{v_{\log}}}\\
&\geq&
\frac{1}{1+\epsilon}\|D^{-1}W_{\varphi', \varphi}-D^{-1}T \|^{\widetilde{\mathcal{Z}}^{v_{\log}}\to \mathcal{B}^{v_{\log}}}\\
&\ge& \frac{1}{1+\epsilon}\|D^{-1}W_{\varphi', \varphi}D-D^{-1}T D\|^{\widetilde{\mathcal{Z}}^{v_{\log}}\to \mathcal{Z}^{v_{\log}}}\\
&=&\frac{1}{1+\epsilon}\|C_{\varphi}-K \|^{\widetilde{\mathcal{Z}}^{v_{\log}}\to \mathcal{Z}^{v_{\log}}}\ge
 \frac{1}{1+\epsilon}\|C_{\varphi} \|_{e}^{\widetilde{\mathcal{Z}}^{v_{\log}}\to \mathcal{Z}^{v_{\log}}},
\end{eqnarray*}
where we have used that $D$ is an isometry and the fact that  $K:=D^{-1}T D$ is a compact operator on $\widetilde{\mathcal{Z}}^{v_{\log}}$. Therefore, since   $\epsilon$ was arbitrary,  we conclude
$$
\|C_{\varphi} \|_{e}^{\widetilde{\mathcal{Z}}^{v_{\log}}\to \mathcal{Z}^{v_{\log}}}\leq\|W_{\varphi', \varphi} \|_{e}^{\widetilde{\mathcal{B}}^{v_{\log}}\to \mathcal{B}^{v_{\log}}}.
$$
Similarly, given  $\epsilon>0$ we can find a compact operator $\widehat{T}: \widetilde{\mathcal{Z}}^{\mu_1} \to \mathcal{Z}^{\mu_2}$ such that
$$
\|C_{\varphi} \|_{e}^{\widetilde{\mathcal{Z}}^{v_{\log}} \to \mathcal{Z}^{v_{\log}}}\ge\frac{1}{1+\epsilon}\| C_{\varphi}-\widehat{T}\|^{\widetilde{\mathcal{Z}}^{v_{\log}} \to \mathcal{Z}^{v_{\log}}}.
$$
Thus, using the fact that  $D$ is an isometry  and the relation  (\ref{ec:2}), we obtain
\begin{eqnarray*}
\frac{1}{1+\epsilon}\| C_{\varphi}-\widehat{T}\|^{\widetilde{\mathcal{Z}}^{v_{\log}} \to \mathcal{Z}^{v_{\log}}}
&=&
\frac{1}{1+\epsilon}\| DC_{\varphi} -D\widehat{T}\|^{\widetilde{\mathcal{Z}}^{v_{\log}} \to\mathcal{B}^{v_{\log}}}\\
&\ge& \frac{1}{1+\epsilon}\|DC_{\varphi}D^{-1}- D\widehat{T}D^{-1}\|^{\widetilde{\mathcal{B}}^{v_{\log}} \to\mathcal{B}^{v_{\log}}}\\
&=&\frac{1}{1+\epsilon}\| W_{\varphi', \varphi}-K\|^{\widetilde{\mathcal{B}}^{v_{\log}} \to\mathcal{B}^{v_{\log}}}\ge \frac{1}{1+\epsilon} \| W_{\varphi', \varphi} \|_{e}^{\widetilde{\mathcal{B}}^{v_{\log}} \to\mathcal{B}^{v_{\log}}}.
\end{eqnarray*}
Hence, since  $\epsilon$ was arbitrary, we conclude
$$
\|C_{\varphi} \|_{e}^{\widetilde{\mathcal{Z}}^{v_{\log}} \to \mathcal{Z}^{v_{\log}}}=\| W_{\varphi', \varphi} \|_{e}^{\widetilde{\mathcal{B}}^{v_{\log}} \to\mathcal{B}^{v_{\log}}}.
$$
Therefore, we have shown the following result:

\begin{thm}
Suppose that $\varphi :\Bbb D\to \Bbb D$ is a holomorphic function and that
$ C_{\varphi}$ is continuous on  $\mathcal{Z}^{v_{\log}}$. Then
$$
\| C_{\varphi}\|_{e}^{\mathcal{Z}^{v_{\log}}\to \mathcal{Z}^{v_{\log}}}\asymp \max \left \{\limsup_{n \to \infty}\frac{(n+2)(n+1)\|J'_{\varphi'}(\varphi^n)\|_{\mathcal{Z}^{v_{\log}}}}{\|g_{n+2}\|_{\mathcal{Z}^{w_{\log}}}}, \limsup_{n \to \infty}\frac{(n+1)\|I'_{\varphi'} (\varphi^n)\|_{\mathcal{Z}^{v_{\log}}}}{\|g_{n+1} \|_{\mathcal{Z}^{v_{\log}}}} \right \}
$$
\end{thm}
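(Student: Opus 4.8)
The plan is to reduce the statement entirely to the weighted composition operator $W_{\varphi',\varphi}$ acting on $\mathcal{B}^{v_{\log}}$. The identity (\ref{zygmund}) established just above already yields
$$
\|C_{\varphi}\|_{e}^{\mathcal{Z}^{v_{\log}}\to\mathcal{Z}^{v_{\log}}}=\|W_{\varphi',\varphi}\|_{e}^{\mathcal{B}^{v_{\log}}\to\mathcal{B}^{v_{\log}}},
$$
so it suffices to insert the estimate of Theorem \ref{teo1} (applied with $u=\varphi'$) and then rewrite each $\mathcal{B}^{v_{\log}}$-quantity on its right-hand side in terms of the functionals $I'_{\varphi'}$, $J'_{\varphi'}$ and the $\mathcal{Z}^{v_{\log}}$-norms. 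Thus no new operator-theoretic input is needed; the work is a dictionary between Bloch and Zygmund quantities.

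First I would record the differential identities for the functionals. From $J'_{u}(f)(z)=\int_0^z J_u(f(s))\,ds$ we get $(J'_u(f))'=J_u(f)$ and hence $(J'_u(f))''=(J_u(f))'=u'f$; similarly $(I'_u(f))''=(I_u(f))'=u f'$. Since the weight $v_{\log}$ is common to both seminorms, this gives at once
$$
\|J'_u(f)\|_{\widetilde{\mathcal{Z}}^{v_{\log}}}=\sup_{z\in\mathbb{D}}v_{\log}(z)|u'(z)f(z)|=\|J_u(f)\|_{\widetilde{\mathcal{B}}^{v_{\log}}},
$$
and likewise $\|I'_u(f)\|_{\widetilde{\mathcal{Z}}^{v_{\log}}}=\|I_u(f)\|_{\widetilde{\mathcal{B}}^{v_{\log}}}$. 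Because $J'_u(f)$ and $I'_u(f)$ vanish together with their first derivatives at $0$, while $J_u(f)$ and $I_u(f)$ vanish at $0$, these seminorms equal the corresponding full norms, so $\|J'_{\varphi'}(\varphi^n)\|_{\mathcal{Z}^{v_{\log}}}=\|J_{\varphi'}(\varphi^n)\|_{\mathcal{B}^{v_{\log}}}$ and $\|I'_{\varphi'}(\varphi^n)\|_{\mathcal{Z}^{v_{\log}}}=\|I_{\varphi'}(\varphi^n)\|_{\mathcal{B}^{v_{\log}}}$.

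Next I would handle the normalizing monomials. For $g_k(z)=z^k$ one checks $g_{n+2}''=(n+2)\,g_{n+1}'$ and $g_{n+1}''=(n+1)\,g_n'$, and since the relevant monomials again vanish to the right order at $0$, these give $\|g_{n+2}\|_{\mathcal{Z}^{w_{\log}}}=(n+2)\|g_{n+1}\|_{\mathcal{B}^{w_{\log}}}$ and $\|g_{n+1}\|_{\mathcal{Z}^{v_{\log}}}=(n+1)\|g_n\|_{\mathcal{B}^{v_{\log}}}$. Combining with the previous paragraph yields the term-by-term equalities
$$
\frac{(n+1)\|J_{\varphi'}(\varphi^n)\|_{\mathcal{B}^{v_{\log}}}}{\|g_{n+1}\|_{\mathcal{B}^{w_{\log}}}}=\frac{(n+2)(n+1)\|J'_{\varphi'}(\varphi^n)\|_{\mathcal{Z}^{v_{\log}}}}{\|g_{n+2}\|_{\mathcal{Z}^{w_{\log}}}},\qquad \frac{\|I_{\varphi'}(\varphi^n)\|_{\mathcal{B}^{v_{\log}}}}{\|g_n\|_{\mathcal{B}^{v_{\log}}}}=\frac{(n+1)\|I'_{\varphi'}(\varphi^n)\|_{\mathcal{Z}^{v_{\log}}}}{\|g_{n+1}\|_{\mathcal{Z}^{v_{\log}}}}.
$$
Since these are exact equalities, taking $\limsup_{n\to\infty}$ and the maximum inside the estimate of Theorem \ref{teo1} transfers it verbatim to the Zygmund quantities, and the claimed equivalence follows.

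I do not expect a genuine obstacle here: the essential-norm transfer is already contained in (\ref{zygmund}) and the spectral estimate in Theorem \ref{teo1}, so the only care required is the bookkeeping of the vanishing conditions at the origin. Were one of the functions to fail to vanish to the needed order, the full norm would strictly exceed the seminorm and the term-by-term identities would degrade to two-sided inequalities; however, since every function appearing (the antiderivatives $I'_{\varphi'},J'_{\varphi'},I_{\varphi'},J_{\varphi'}$ and the monomials $g_k$ for the relevant indices) does vanish to the required order, the identities are exact and the only equivalence constant that survives is the one furnished by Theorem \ref{teo1}.
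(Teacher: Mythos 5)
Your proposal is correct and follows essentially the same route as the paper: reduce via the identity (\ref{zygmund}) (proved with the derivative isometry $D$ and the restriction argument of Lemma \ref{le-es-rest}) to $\|W_{\varphi',\varphi}\|_{e}^{\mathcal{B}^{v_{\log}}\to\mathcal{B}^{v_{\log}}}$, apply Theorem \ref{teo1} with $u=\varphi'$, and transfer the Bloch quantities to Zygmund quantities through the exact identities $\|J'_{\varphi'}(\varphi^n)\|_{\mathcal{Z}^{v_{\log}}}=\|J_{\varphi'}(\varphi^n)\|_{\mathcal{B}^{v_{\log}}}$, $\|I'_{\varphi'}(\varphi^n)\|_{\mathcal{Z}^{v_{\log}}}=\|I_{\varphi'}(\varphi^n)\|_{\mathcal{B}^{v_{\log}}}$, $\|g_{n+2}\|_{\mathcal{Z}^{w_{\log}}}=(n+2)\|g_{n+1}\|_{\mathcal{B}^{w_{\log}}}$ and $\|g_{n+1}\|_{\mathcal{Z}^{v_{\log}}}=(n+1)\|g_n\|_{\mathcal{B}^{v_{\log}}}$, which are exactly the relations the paper records (with $\sup$ in place of $\limsup$) before stating the theorem. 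Your bookkeeping of the vanishing conditions at the origin is the same point the paper relies on implicitly, so no gap remains.
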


As an immediate consequence of the above result, we have the following corollary:

\begin{cor}
Suppose that   $\varphi: \mathbb{D}\to \mathbb{D}$ is a holomorphic function. The composition operator  $C_{\varphi}$ is compact on $\mathcal{Z}^{v_{\log}}$ if and only if
$$
 \max\left\{\lim_{n \to \infty}\frac{(n+2)(n+1)\left\|J_{\varphi'}'\left(\varphi^n\right)\right\|_{\mathcal{Z}^{v_{\log}}}}
  {\left\|g_{n+2}\right\|_{\mathcal{Z}^{w_{\log}}}}, \lim_{n \to \infty} \frac{(n+1)\left\|I_{\varphi'}'\left(\varphi^n\right)\right\|_{\mathcal{Z}^{v_{\log}}}}
  {\left\|g_{n+1}\right\|_{\mathcal{Z}^{v_{\log}}}}\right\} = 0.
$$
\end{cor}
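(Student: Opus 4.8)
The plan is to obtain this corollary as a direct consequence of the essential norm estimate in the preceding theorem, combined with the elementary fact, recalled in the introduction, that a continuous operator $T:X\to Y$ between Banach spaces is compact precisely when $\|T\|_{e}^{X\to Y}=0$. Throughout, write
$$
A=\limsup_{n\to\infty}\frac{(n+2)(n+1)\|J'_{\varphi'}(\varphi^n)\|_{\mathcal{Z}^{v_{\log}}}}{\|g_{n+2}\|_{\mathcal{Z}^{w_{\log}}}}, \qquad B=\limsup_{n\to\infty}\frac{(n+1)\|I'_{\varphi'}(\varphi^n)\|_{\mathcal{Z}^{v_{\log}}}}{\|g_{n+1}\|_{\mathcal{Z}^{v_{\log}}}},
$$
so that the preceding theorem reads $\|C_\varphi\|_{e}^{\mathcal{Z}^{v_{\log}}\to\mathcal{Z}^{v_{\log}}}\asymp\max\{A,B\}$ whenever $C_\varphi$ is continuous.

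For the forward implication I would argue as follows. If $C_\varphi$ is compact, then it is in particular continuous, so the theorem applies and gives $\|C_\varphi\|_{e}\asymp\max\{A,B\}$; since compactness is equivalent to $\|C_\varphi\|_{e}=0$ and the two sides of $\asymp$ control each other up to a fixed positive constant, we obtain $\max\{A,B\}=0$. As $A,B\ge 0$, this forces $A=B=0$. Finally, each quotient appearing in the definitions of $A$ and $B$ is non-negative, and for a non-negative sequence $\{a_n\}$ the relation $0\le\liminf_n a_n\le\limsup_n a_n$ shows that $\limsup_n a_n=0$ already implies $\lim_n a_n=0$; hence both limits in the statement vanish, which is the asserted condition.

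For the reverse implication the only point needing care is that the essential norm theorem presupposes continuity, which is not assumed a priori in the corollary; this is the step I expect to be the main (and indeed only) obstacle. I would recover continuity first: if both limits in the statement equal zero, the corresponding sequences are bounded, so the two suprema in the continuity corollary are finite and $C_\varphi$ is continuous on $\mathcal{Z}^{v_{\log}}$. The theorem then applies, and since $\lim=0$ trivially gives $\limsup=0$, we have $A=B=0$, whence $\max\{A,B\}=0$ and therefore $\|C_\varphi\|_{e}=0$; thus $C_\varphi$ is compact. Beyond this bookkeeping step there is no genuine difficulty, as the whole analytic content is carried by the essential norm theorem, and the corollary merely specializes it to the case of vanishing essential norm.
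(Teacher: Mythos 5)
Your proposal is correct and follows essentially the same route as the paper, which states the corollary as an immediate consequence of the preceding essential norm theorem together with the fact, recalled in the introduction, that a continuous operator is compact if and only if its essential norm vanishes. The one point the paper leaves tacit --- that in the reverse direction continuity must first be recovered from the vanishing limits (via the continuity criterion, since convergent nonnegative sequences are bounded) before the essential norm theorem can be invoked --- is precisely the bookkeeping step you supply, and you supply it correctly.
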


% -----------------------------------------------------------


\begin{thebibliography}{11111}
\bibitem{At92}  Attele K.  Toeplitz and Hankel operators on Bergman one space.  Hokkaido Math J 1992;  21:  279--293.

\bibitem{BS91}  Brown L, Shields AL. Multipliers and cyclic vectors in the Bloch space. Michigan Math J 1991;  38: 141--146.

\bibitem{CCFR13}  Castillo R,  Clahane D,  Far{\'i}as-L\'opez J, Ramos-Fern\'andez JC. Composition operators from logarithmic Bloch
spaces to weighted Bloch spaces.  Appl Math Comput 2013;  219: 6692--6706.

\bibitem{CMR14}Castillo  R, Marrero-Rodr\'iguez CE, Ramos-Fern\'andez JC. On a Criterion for Continuity and Compactness of Composition Operators on the Weighted Bloch Space. To appear in  Mediterr J Math.

\bibitem{esmaeili-lindstrom}Esmaeili  K, Lindstr\"om M. Weighted composition operators between Zygmund type spaces and their essential norms.
Integr Equ Oper Theory 2013;  75: 473--490.

\bibitem{galanopoulus}  Galanopoulos P. On $B_{log}$ to $Q_{\log}^p$ pullbacks. J Math Anal Appl 2008;
337:  712–-725.

\bibitem{HKLRS12} Hyv\"arinen  O,  Kemppainen M,  Lindstr\"om  M,  Rautio A, Saukko E. The essential norm of weighted composition operators on weighted Banach spaces of analytic functions. Integr Equ Oper Theory 2012;  72: 151--157.

\bibitem{hyvarinen-lindstrom}  Hyv\"{a}rinen O,  Lindstr\"{o}m  M. Estimates of essential norms of weighted composition operators between Bloch-type spaces. J Math Anal Appl 2012;  393:  38--44.


\bibitem{malave-ramos} Malav\'e-Ram{\'i}rez  MT,  Ramos-Fern\'andez JC. The associated weight and the essential norm of weigthed composition operators. To appear in Banach J Math Anal.

\bibitem{montes}  Montes-Rodr{\'i}guez A. Weighted composition operators on weighted Banach spaces of analytic functions. J. London Math Soc 2000; 61:  872--884.

\bibitem{stevic}  Stevi\'c S. Norm of some operators from logarithmic Bloch-type spaces to weighted-type spaces. Appl Math Comput 2012;  218: 11163--11170.

\bibitem{tjani}  Tjani M. Compact composition operators on Besov spaces. Trans Amer Math Soc 2003; 355:
4683--4698.

\bibitem{ye}  Ye S. A weighted composition operator on the logarithmic Bloch space.  Bull Korean Math Soc 2010;  47: 527–-540.

\bibitem{ye2}  Ye S. Norm and essential norm of composition followed by differentiation from logarithmic Bloch spaces to $H_{\mu}^{\infty}$.
Abstr Appl Anal  2014; Art ID 725145.

\bibitem{yoneda}  Yoneda R. The composition operators on weighted Bloch space. Arch Math 2002; 78: 310–-317.

\bibitem{Zh90}  Zhu K.  Operator theory in function spaces. New York: Marcel Dekker, 1990.
\end{thebibliography}
\end{document}